\documentclass{amsart}

\usepackage{amsmath}
\usepackage{amssymb}
\usepackage{commath}
\usepackage{mathtools}
\usepackage{tikz-cd}

\usepackage{hyperref}

\allowdisplaybreaks

\theoremstyle{plain}
\newtheorem{theorem}{Theorem}
\newtheorem{lemma}[theorem]{Lemma}
\newtheorem{proposition}[theorem]{Proposition}
\newtheorem{corollary}[theorem]{Corollary}
\theoremstyle{definition}
\newtheorem{definition}[theorem]{Definition}
\newtheorem{remark}[theorem]{Remark}

\numberwithin{theorem}{section}
\numberwithin{equation}{section}

\newcommand{\C}{\mathbb{C}}

\newcommand{\R}{\mathbb{R}}

\newcommand{\re}{\operatorname{Re}}

\newcommand{\cP}{\mathcal{P}}
\newcommand{\cH}{\mathcal{H}}

\newcommand{\cB}{\mathcal{B}}

\newcommand{\cA}{\mathcal{A}}

\newcommand{\cF}{\mathcal{F}}

\newcommand{\mfT}{\mathfrak{T}}
\newcommand{\mfA}{\mathfrak{A}}
\newcommand{\mfC}{\mathfrak{C}}

\begin{document}

\title[Bargmann for translation invariant operators]{A Bargmann transform for translation invariant operators on weighted Bergman spaces of the complex half-plane}

\author{Ra\'ul Quiroga-Barranco}
\address{Centro de Investigaci\'on en Matem\'aticas, Guanajuato, M\'exico}
\email{quiroga@cimat.mx}

\keywords{Bergman space, Bargmann transform, Toeplitz operator}

\subjclass[2020]{Primary 46E22 47B32; Secondary 47B35 47C15}

\maketitle

\begin{abstract}
	Let us denote with $\mathcal{A}^2_\lambda(\mathbb{C}_+)$ ($\lambda > -1$) a weighted Bergman space over the right half-plane $\mathbb{C}_+$, which admits a unitary representation of $\mathbb{R}$ given by the (imaginary) translations of $\C_+$. We study the von Neumann algebra $\mathfrak{T}\big(\mathcal{A}^2_\lambda(\mathbb{C}_+)\big)$ of bounded translation invariant operators. We prove that every element of $\mathfrak{T}\big(\mathcal{A}^2_\lambda(\mathbb{C}_+)\big)$ is a Toeplit operator $T^{(\lambda)}_A$ for some translation invariant operator $A$ of the ambient $L^2$-space of $\mathcal{A}^2_\lambda(\mathbb{C}_+)$. Furthermore, we prove that this can be achieved through a commutative von Neumann algebra $\mathfrak{A}_\lambda$ that yields an assignment $A \mapsto T^{(\lambda)}_A$ that turns out to be a $*$-isomorphism of $*$-algebras.	Our main tool is a Bargmann transform $\mathcal{B}_\lambda$ for which we establish several operator and representation theoretic properties. We also describe the translation invariant subspaces of $\mathcal{A}^2_\lambda(\mathbb{C}_+)$ and obtain formulas for the diagonalizing spectral functions for translation invariant Toeplitz operators whose symbols are translation invariant operators. The latter generalize previously known results for function symbols.
\end{abstract}


\section{Introduction} 
\label{sec:Intro}
Bergman spaces and Toeplitz operators is an interesting topic of research in functional analysis and operator theory. One approach to such topic is based on the use of symmetries for the domain under consideration and how they are reflected on the behavior of operators. For the right half-plane, denoted here with $\C_+$, the one-parameter group of translations yields unitary representations on the corresponding weighted Bergman spaces $\cA^2_\lambda(\C_+)$ ($\lambda > -1$), which are then used to study Toeplitz operators by considering those that intertwine (i.e.~ commute with) such representations. Some relevant works along this idea are \cite{GKVParabolic,GQVUnitDisk,HHMVerticalWeighted,HMVVertical}. Note that these references make use of the upper half-plane instead of the right half-plane $\C_+$; it is needless to say that our choice does not change the theory in any essential way. 

About the previous research we may single out two important features. First, a so-called Bargmann transform is used to unitarily map Bergman spaces onto $L^2$-spaces, but only as a secondary tool to achieve the desired properties and results. Second, Toeplitz operators are studied only for the case of function symbols, mostly ignoring the case of operator symbols.

The contribution of this work is two-fold, which intends to improve the previous features just described. In the first place, we present a Bargmann transform $\cB_\lambda$ from $L^2(\R_+)$ onto $\cA_\lambda(\C_+)$ as an operator whose study is interesting in its own right. In second place, we study Toeplitz operators whose symbols are bounded operators themselves and not just functions. This is all developed in the setup of the unitary representations on $\cA^2_\lambda(\C_+)$ given by the translations on $\C_+$. As we will explain, this allows us to obtain considerably stronger results than those from previous works.

Our Bargmann transform $\cB_\lambda$ is specifically constructed for the unitary representations defined by translations. It is build out of the isometric embedding $V_\lambda$ of $L^2(\R_+)$ into $L^2(\C_+, v_\lambda)$ defined in Proposition~\ref{prop:Vlambda}, where $L^2(\C_+, v_\lambda)$ is the ambient $L^2$-space of $\cA^2_\lambda(\C_+)$ (see~section~\ref{sec:BergmanToeplitz}). It also makes use of a unitary map $\cF_\lambda$ of $L^2(\C_+, v_\lambda)$ defined by applying Fourier transform in the imaginary part variable (see~section~\ref{sec:Bargmann}). The first main result, Theorem~\ref{thm:Bargmann-diagram}, highlights the relevance of $\cB_\lambda$: the pairs of Hilbert spaces $(L^2(\R_+), L^2(\C_+, v_\lambda))$ and $(\cA^2_\lambda(\C_+), L^2(\C_+, v_\lambda))$ have a natural correspondence through the pair of unitary operators $(\cB_\lambda, \cF_\lambda)$ (see~diagram~\eqref{eq:Bargmann-diagram}). In Corollary~\eqref{cor:Bargmann-formulas} we obtain formulas for $\cB_\lambda$ and its inverse that clearly resemble those for the classical Fock-Segal-Bargmann transform.

In section~\ref{sec:BargmannIntertwining} we prove that the Bargmann transform $\cB_\lambda$ and the unitary operator $\cF_\lambda$ intertwine the unitary representations given by multiplication by characters in the pair $(L^2(\R_+), L^2(\C_+, v_\lambda))$ and by translations in the pair $(\cA^2_\lambda(\C_+), L^2(\C_+, v_\lambda))$ (see~Corollary~\ref{cor:Bargmann-intertwining}). This is a very important property, ignored until now, whose explicit application should be able to advance our knowledge of Bergman spaces. In this work, we use it in Theorem~\ref{thm:cA-invariant-subspaces} to describe the translation invariant subspaces of $\cA^2_\lambda(\C_+)$; these turn out be in one-to-one correspondence with the Borel subsets of $\R_+$ through $\cB_\lambda$ (see~also~Corollary~\ref{cor:cA-invariant-subspaces}).

In section~\ref{sec:TranslationInvariantOperators} we start our study of translation invariant operators of $\cA^2_\lambda(\C_+)$, either Toeplitz or not, and with operator symbols in the former case. It is already known (see~\cite{HHMVerticalWeighted,HMVVertical}) that such operators admit a diagonalizing function $h \in L^\infty(\R_+)$ with respect to $\cB_\lambda$. We provide a self-contained elementary proof of this fact in Proposition~\ref{prop:TranslationInvariance}. We call such diagonalizing functions Bargmann spectral (see~Definition~\ref{def:spectralFromBargmann}) to emphasize that they originate from $\cB_\lambda$.

The next main result is Theorem~\ref{thm:TranslationInvariantOperatorsVonNeumann}, where we give a complete description of all translation invariant operators of $\cA^2_\lambda(\C_+)$, which we denote by $\mfT\big(\cA^2_\lambda(\C_+)\big)$. We prove that any element of the latter is a Toeplitz operator but with an operator symbol. Furthermore, we construct a von Neumann algebra $\mfA_\lambda$ of operators acting on $L^2(\C_+, v_\lambda)$ such that the assignment $A \mapsto T^{(\lambda)}_A$ from $\mfA_\lambda$ onto $\mfT\big(\cA^2_\lambda(\C_+)\big)$ is a $*$-isomorphism of $*$-algebras (not just vector spaces). The non-triviality of these and other properties are explained in Remark~\ref{rmk:TranslationInvariantOperatorsVonNeumann}.

On the other hand, Theorem~\ref{thm:AlambdaConvolutionType} decodes the structure of the von Neumann algebra $\cA_\lambda$. It is the algebra of operators of the form $I \otimes C$, where $C$ is a convolution type operator associated to a function belonging to $L^\infty(\R_+)$, with respect to the tensor product decomposition $L^2(\C_+, v_\lambda) = L^2(\R_+, \widehat{v}_\lambda) \otimes L^2(\R)$ (see~\ref{eq:L2-tensordecomposition} and the related discussion).

In section~\ref{sec:MoreTranslationInvOp} we consider further operator symbols for our Toeplitz operators: those that intertwine $\cF_\lambda$ and operators that we call imaginary part independent. The latter are those of the form $\widehat{A} \otimes I$ with respect to the tensor product decomposition \eqref{eq:L2-tensordecomposition} (see~Definition~\ref{def:ImPartInd}). It turns out that a multiplier operator $M_a$, where $a \in L^\infty(\C_+)$, is an imaginary part independent operator if and only if $a$ does not depend on its imaginary part variable (see~Corollary~\ref{cor:ImPartIndSymbols}). We also refer to Proposition~\ref{prop:ImPartInd-withfunction} for a further interpretation of our notation.

Finally, in section~\ref{sec:Bargmann-spectral} we obtain formulas for the Bargmann spectral functions of Toeplitz operators with suitable translation invariant operator symbols. Theorem~\ref{thm:spectral-FourierIntertwining} yields a formula for the Bargmann spectral function in the case of operators intertwining $\cF_\lambda$; in this case, the formula only depends on the operator symbol $A$, the isometric embedding $V_\lambda$ and its adjoint $V_\lambda^*$. Theorem~\ref{thm:ImPartInd-spectral} provides a formula for the Bargmann spectral function for imaginary part independent operator symbols that very closely resembles the formula known for function symbols. As a matter of fact, it is proved in Corollary~\ref{cor:spectral-FourierIntertwining} that our formulas for operator symbols obtained section~\ref{sec:Bargmann-spectral} have as a particular case those for function symbols.

\section{Bergman spaces and Toeplitz operators on $\C_+$}
\label{sec:BergmanToeplitz}
In this section we recall some well-known facts about Bergman spaces on a complex half-plane and the Toeplitz operators on such spaces. For most claims below, our main references are \cite{GKVParabolic,GQVUnitDisk,HHMVerticalWeighted,HMVVertical,QVUnitBall1,UpmeierToepBook}. Note that we will make use of the right half-plane, while most of these references introduce Bergman spaces and Toeplitz operators using the upper half-plane. However, \cite{UpmeierToepBook} considers Bergman spaces corresponding to the left half-plane. Nevertheless, rotations in $\C$ around the origin easily show that our claims in this section follow directly from the results in these references.

Let us denote. from now on. the complex right half-plane by
\[
	\C_+ = \{ z \in \C : \re(z) > 0\}.
\]
For every weight $\lambda > -1$, we define
\[
	\dif v_\lambda(z) = \frac{\lambda + 1}{4 \pi} \big(\re(z)\big)^\lambda \dif z,
\]
as a measure in $\C_+$. The weighted Bergman space corresponding to the weight $\lambda > -1$ is thus defined by
\[
	\cA^2_\lambda(\C_+) = \{ f \in L^2(\C_+, v_\lambda) : f \text{ is holomorphic} \}.
\]
Every weighted Bergman space is well known to be a reproducing kernel Hilbert space. The Bergman kernel of $\cA^2_\lambda(\C_+)$ is given by
\begin{align*}
	K_\lambda : \C_+ \times \C_+ &\rightarrow \C \\
	K_\lambda(z, \zeta) &= \bigg(\frac{z + \overline{\zeta}}{2}\bigg)^{-(\lambda + 2)}.
\end{align*}
In particular, the Bergman projection $P_\lambda : L^2(\C_+,v_\lambda) \rightarrow \cA^2_\lambda(C_+)$ can be written~as
\begin{align*}
	\big(P_\lambda(f)\big)(z) &= \int_{\C_+} f(\zeta)
				\bigg(\frac{z + \overline{\zeta}}{2}\bigg)^{-(\lambda + 2)}
					\dif v_\lambda(\zeta) \\
			&= \frac{\lambda + 1}{4\pi}\int_{\C_+} f(\zeta) 
				\bigg(\frac{z + \overline{\zeta}}{2}\bigg)^{-(\lambda + 2)}
					\big(\re(\zeta)\big)^\lambda \dif\zeta  \\
			&= \frac{\lambda + 1}{\pi} \int_{\C_+} f(\zeta)
				\frac{\big(2\re(\zeta)\big)^\lambda}{(z + \overline{\zeta})^{\lambda + 2}} 
							\dif \zeta,
\end{align*}
for every $f \in \cA^2_\lambda(\C_+)$ and $z \in \C_+$.

In this work, we will consider Toeplitz operators obtained from arbitrary bounded operators of the ambient $L^2$-space. For every $\lambda > -1$ and a bounded operator $A :  L^2(\C_+,v_\lambda) \rightarrow L^2(\C_+, v_\lambda)$, the \textbf{Toeplitz operator with symbol $A$} is defined~by
\begin{align*}
	T_A = T^{(\lambda)}_A : \cA^2_\lambda(\C_+) &\rightarrow \cA^2_\lambda(\C_+)  \\
		T_A(f) &= P_\lambda(A(f)),
\end{align*}
where the super-index $(\lambda)$ is sometimes omitted when it is clear from the context. Note that an equivalent expression for the Toeplitz operator above is given by
\[
	T^{(\lambda)}_A = P_\lambda \circ A \circ P_\lambda^*. 
\]
The reason is that $P_\lambda^* : \cA^2_\lambda(\C_+) \hookrightarrow L^2(\C_+, v_\lambda)$ is the natural inclusion.

As a particular case, for every $a \in L^\infty(\C_+)$ and the corresponding multiplier operator $M_a$ of $L^2(\C_+,v_\lambda)$ we use the notation
\[
	T^{(\lambda)}_a = T^{(\lambda)}_{M_a}.
\]
However, we emphasize our use of arbitrary bounded operators of $L^2(\C_+, v_\lambda)$ as symbols for Toeplitz operators. This is in contrast with constructions found in our references and most of the literature on this topic, where only measurable functions, through their multiplier operators, are used as symbols of Toeplitz operators.

\section{A Bargmann transform for $\cA^2_\lambda(\C_+)$}
\label{sec:Bargmann}
Note that the definition of the measure $v_\lambda$ clearly yields the tensor product decomposition
\begin{equation}\label{eq:L2-tensordecomposition}
	L^2(\C_+, v_\lambda) = L^2(\R_+, \widehat{v}_\lambda) 
		\otimes L^2(\R),
\end{equation}
where we define the measure $\widehat{v}_\lambda$ on $\R_+$ by
\begin{equation}\label{eq:widehatv_lambda}
	\dif \widehat{v}_\lambda(x) = \frac{\lambda + 1}{4\pi} x^\lambda \dif x.
\end{equation}
We will assume this tensor product decomposition in the rest of this work. Moreover, we will make use of the notation and well-known properties for tensor products of Hilbert and $L^2$-spaces, as well as their operators, without further mention. We refer to \cite[Appendix~A.3]{FollandHarmonic} for more details on tensor products.

Let us consider the Fourier transform in $L^2(\R)$ given by
\[
\big(\cF(g)\big)(\xi) = \frac{1}{\sqrt{2\pi}}
\int_\R g(y) e^{-i\xi y} \dif y,
\]
for $g \in L^1(\R) \cap L^2(\R)$. 
We introduce the unitary operator $\cF_\lambda = I \otimes \cF$ of $L^2(\C_+,v_\lambda)$ whose definition is obtained from the tensor product decomposition \eqref{eq:L2-tensordecomposition}. The next result, characterizes the elements of $\cA^2_\lambda(\C_+)$ up to the operator $\cF_\lambda$.

\begin{proposition}\label{prop:CR-Fourier}
	With the previous notation, a function $\psi \in L^2(\C_+, v_\lambda)$ satisfies $\cF_\lambda(\psi) \in \cA^2_\lambda(\C_+)$ if and only if there exists a measurable function $\varphi : \R_+ \rightarrow \C$ such~that
	\[
		\psi(x + i\xi) = e^{-x\xi} \varphi(\xi) \chi_{\R_+}(\xi),
	\]
	for almost every $x + i \xi \in \C_+$, where $\varphi$ satisfies the integrability condition
	\[
		\int_{\R_+} \frac{|\varphi(\xi)|^2}{\xi^{\lambda + 1}} \dif \xi < +\infty.
	\]
\end{proposition}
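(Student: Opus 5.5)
The plan is to translate the Cauchy--Riemann equation through the partial Fourier transform in the imaginary-part variable. The key observation is that $\cF$ converts $\partial_y$ into multiplication by $i\xi$ on the Fourier side. Write $f = \cF_\lambda(\psi)$, so that formally
\[
	f(x+iy) = \frac{1}{\sqrt{2\pi}} \int_\R \psi(x+i\xi) e^{-i\xi y} \dif \xi .
\]
The equation $\tfrac12(\partial_x + i\partial_y) f = 0$ then becomes $\partial_x \psi(x+i\xi) + \xi\, \psi(x+i\xi) = 0$. Its solutions are exactly $\psi(x+i\xi) = e^{-x\xi}\varphi(\xi)$. Membership in $L^2(\C_+, v_\lambda)$, which is preserved by the unitary $\cF_\lambda$, then forces both the support condition and the integrability condition. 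The key computation, by Fubini, is
\[
	\int_{\C_+} |\psi|^2 \dif v_\lambda
	= \frac{\lambda+1}{4\pi} \int_\R |\varphi(\xi)|^2 \int_0^\infty e^{-2x\xi} x^\lambda \dif x \dif \xi .
\]
The inner integral is $+\infty$ when $\xi \le 0$, and equals $\Gamma(\lambda+1)(2\xi)^{-(\lambda+1)}$ when $\xi > 0$. Hence $\varphi$ must vanish a.e.\ on $\xi \le 0$, and the norm is a constant multiple of $\int_{\R_+} |\varphi(\xi)|^2 \xi^{-(\lambda+1)} \dif \xi$.

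For the sufficiency direction, start from $\psi = e^{-x\xi}\varphi(\xi)\chi_{\R_+}(\xi)$ with the integrability condition. The computation above shows $\psi \in L^2(\C_+,v_\lambda)$. For a.e.\ fixed $x>0$, the function $\xi \mapsto \psi(x+i\xi)$ is in $L^1(\R)\cap L^2(\R)$. Indeed, Cauchy--Schwarz applied to $|\varphi|\xi^{-(\lambda+1)/2}$ and $\xi^{(\lambda+1)/2}e^{-x\xi}$ gives integrability. Therefore $\cF_\lambda(\psi)$ is given pointwise by
\[
	\frac{1}{\sqrt{2\pi}} \int_0^\infty \varphi(\xi) e^{-\xi z} \dif \xi, \qquad z = x+iy .
\]
The same Cauchy--Schwarz bound is uniform for $\re(z) \ge \delta > 0$. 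So this Laplace-type integral converges locally uniformly, and it is holomorphic on $\C_+$ by differentiation under the integral sign, or by Morera's theorem. Being holomorphic and in $L^2(\C_+,v_\lambda)$, it lies in $\cA^2_\lambda(\C_+)$.

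For the necessity direction, which I expect to be the main obstacle, one cannot differentiate $\psi$ pointwise. I would therefore work weakly. Since $f = \cF_\lambda(\psi)$ is holomorphic, for test functions $a \in C_c^\infty(\R_+)$ and $b \in C_c^\infty(\R)$ we have
\[
	\int_{\C_+} f(x+iy)\, \big(\partial_x + i\partial_y\big)\big(a(x)b(y)\big) \dif x \dif y = 0 .
\]
The weight $x^\lambda$ is harmless here, since it is bounded above and below on the support of $a$. By Fubini and Plancherel in the $y$ variable, this identity transfers to $\psi$. Letting $b$ range over functions whose Fourier transforms $\widehat b$ exhaust $C_c^\infty(\R)$, we get that for a.e.\ $\xi$ the function $x \mapsto \psi(x+i\xi)$ satisfies $\partial_x \psi + \xi\psi = 0$ in $\mathcal{D}'(\R_+)$. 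This step requires some care with the choice of a measurable representative, using a countable dense family of test functions to fix the null set. Consequently $\partial_x\big(e^{x\xi}\psi(x+i\xi)\big) = 0$ distributionally, so $e^{x\xi}\psi(x+i\xi)$ equals a constant $\varphi(\xi)$ for a.e.\ $x$. Measurability of $\varphi$ follows by recovering it, for instance, as $\varphi(\xi) = \int_1^2 e^{x\xi}\psi(x+i\xi)\dif x$. The norm computation of the first paragraph then yields $\varphi = 0$ a.e.\ on $\xi \le 0$ together with the stated integrability condition.
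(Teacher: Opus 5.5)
Your proposal is correct and follows the same route as the paper: conjugation by $\cF_\lambda$ turns the Cauchy--Riemann equation into $\partial_x\psi+\xi\psi=0$, whose solutions $e^{-x\xi}\varphi(\xi)$ are then handled by the same Gamma-integral norm computation, and your Laplace-integral argument for sufficiency and distributional argument for necessity supply the rigor that the paper's ``suitable dense subset'' reasoning leaves implicit. There are two cosmetic slips: acting on $\psi$, the derivative $\partial_y$ becomes $M_{-i\xi}$ rather than $M_{i\xi}$ (your displayed equation already has the correct sign), and for $\widehat b$ to exhaust $C_c^\infty(\R)$ you must let $b$ be a Schwartz function rather than compactly supported, which is harmless because $f$ is square integrable on vertical strips $\{\alpha\le\re(z)\le\beta\}$ with $\alpha>0$.
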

\begin{proof}
	By the properties of Fourier transform, we have
	\[
		\cF_\lambda^{-1} \circ \frac{\partial}{\partial y} \circ \cF_\lambda
				= M_{-i\xi}
	\]
	on a dense subset of $L^2(\C_+, v_\lambda)$. Hence, Cauchy-Riemann equations in the variable $z = x + iy$ correspond to the equation
	\[
		\Big(\frac{\partial}{\partial x} + \xi\Big) \psi = 0
	\]
	for the variable $w = x + i\xi$, when $\psi$ belongs to a suitable dense subset of $L^2(\C_+, v_\lambda)$. The general solution of this equation is given by functions of the form
	\[
		\psi(x + i\xi) = e^{-x\xi} \varphi(\xi),
	\]
	for some $\varphi : \R \rightarrow \C$. Hence, a function $\psi$ of this sort satisfies $\cF_\lambda(\psi) \in \cA^2_\lambda(\C_+)$ if and only it also belongs to $L^2(\C_+, v_\lambda)$. We now compute the $L^2$-norm of such a function $\psi$ as follows
	\begin{align*}
		\|\psi\|^2 &= \frac{\lambda + 1}{4\pi} \int_{\R_+ \times \R}
			e^{-2x\xi} |\varphi(\xi)|^2 
						x^\lambda \dif x \dif \xi   \\
				&= \frac{\lambda + 1}{4\pi} \int_\R |\varphi(\xi)|^2
					\Big(\int_{\R_+} e^{-2x\xi} x^\lambda \dif x \Big) \dif \xi	 \\
		\intertext{for this to be finite, it is necessary and sufficient to have $\xi \in \R_+$ in the integral between parenthesis, and in this case}
				&= \frac{\lambda + 1}{4\pi} \int_{\R_+} |\varphi(\xi)|^2
					\frac{\Gamma(\lambda + 1)}{(2\xi)^{\lambda + 1}} \dif \xi  \\
				&= \frac{\Gamma(\lambda + 2)}{\pi2^{\lambda + 3}}
					\int_{\R_+} \frac{|\varphi(\xi)|^2}{\xi^{\lambda + 1}}.
	\end{align*}
	This computation proves that for $\psi \in L^2(\C_+, v_\lambda)$ we have $\cF_\lambda(\psi) \in \cA^2_\lambda(\C_+)$ if and only if the expression in the statement holds, including the integrability condition on $\varphi$, thus completing the proof.
\end{proof}

The previous result and its proof leads us to the construction of a natural isometry of $L^2(\R_+)$ into $L^2(\C_+,v_\lambda)$.

\begin{proposition}\label{prop:Vlambda}
	The operator $V_\lambda : L^2(\R_+) \rightarrow L^2(\C_+, v_\lambda)$ given by
	\[
		\big(V_\lambda(\varphi)\big)(x + i\xi) =
			\sqrt{\frac{\pi 2^{\lambda + 3}\xi^{\lambda + 1}}{\Gamma(\lambda + 2)}}
					e^{-x\xi} \varphi(\xi) \chi_{\R_+}(\xi),
	\]
	for $\varphi \in L^2(\R_+)$ and for almost every $x + i\xi \in \C_+$, is an isometry that satisfies
	\[
		\big(\cF_\lambda \circ V_\lambda\big)\big(L^2(\R_+)\big) 
			= \cA^2_\lambda(\C_+).
	\]
	The adjoint operator $V_\lambda^*$ is given by
	\begin{align*}
		\big(V_\lambda^*(\psi)\big)(\xi) 
			&= \sqrt{\frac{2^\lambda(\lambda + 1)\xi^{\lambda + 1}}{2\pi \Gamma(\lambda + 1)}} 
				\int_{\R_+} \psi(x + i \xi) e^{-x\xi} x^\lambda \dif x  \\
			&= \sqrt{\frac{\pi 2^{\lambda + 3}\xi^{\lambda + 1}}{\Gamma(\lambda + 2)}}
				\int_{\R_+} \psi(x + i \xi) e^{-x\xi} \dif \widehat{v}_\lambda(x)
	\end{align*}
	for every $\psi \in L^2(\C_+, v_\lambda)$ and for almost every $\xi \in \R_+$.
\end{proposition}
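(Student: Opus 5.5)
The proof has three parts: the isometry property, the image identity, and the adjoint formula.

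\emph{Isometry.} Set
\[
c_\lambda = \frac{\pi 2^{\lambda+3}}{\Gamma(\lambda+2)},
\]
so that $V_\lambda(\varphi)(x+i\xi) = \sqrt{c_\lambda \xi^{\lambda+1}}\, e^{-x\xi}\varphi(\xi)\chi_{\R_+}(\xi)$. This has exactly the form of Proposition~\ref{prop:CR-Fourier}, with the function there equal to $\sqrt{c_\lambda\xi^{\lambda+1}}\varphi(\xi)$. The norm computation in that proof (Fubini/Tonelli plus the Gamma integral $\int_{\R_+} e^{-2x\xi}x^\lambda\dif x = \Gamma(\lambda+1)(2\xi)^{-(\lambda+1)}$) then gives
\[
\|V_\lambda\varphi\|^2 = \frac{1}{c_\lambda}\int_{\R_+}\frac{c_\lambda\xi^{\lambda+1}|\varphi(\xi)|^2}{\xi^{\lambda+1}}\dif\xi = \|\varphi\|^2_{L^2(\R_+)}.
\]
Since the integrand is nonnegative, Tonelli justifies the exchange of integrals, so $V_\lambda$ is a well-defined linear isometry.

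\emph{Image.} By Proposition~\ref{prop:CR-Fourier}, every $V_\lambda\varphi$ satisfies $\cF_\lambda V_\lambda\varphi\in\cA^2_\lambda(\C_+)$, because the integrability condition there reduces to $\varphi\in L^2(\R_+)$. For the reverse inclusion, take $f\in\cA^2_\lambda(\C_+)$ and let $\psi=\cF_\lambda^{-1}f \in L^2(\C_+,v_\lambda)$, which is legitimate since $\cF_\lambda$ is unitary. Proposition~\ref{prop:CR-Fourier} gives $\psi(x+i\xi)=e^{-x\xi}\phi(\xi)\chi_{\R_+}(\xi)$ with $\int_{\R_+}|\phi|^2\xi^{-\lambda-1}\dif\xi<\infty$. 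Define
\[
\varphi = \frac{\phi}{\sqrt{c_\lambda\xi^{\lambda+1}}}.
\]
Then $\varphi\in L^2(\R_+)$ and $V_\lambda\varphi=\psi$, so $\cF_\lambda V_\lambda\varphi = f$. This proves $(\cF_\lambda\circ V_\lambda)(L^2(\R_+)) = \cA^2_\lambda(\C_+)$.

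\emph{Adjoint.} For $\varphi\in L^2(\R_+)$ and $\psi\in L^2(\C_+,v_\lambda)$, write out the inner product and use Fubini to integrate in $x$ first:
\[
\langle V_\lambda\varphi,\psi\rangle = \int_{\R_+}\varphi(\xi)\,\overline{\sqrt{c_\lambda\xi^{\lambda+1}}\int_{\R_+}\psi(x+i\xi)e^{-x\xi}\dif\widehat v_\lambda(x)}\,\dif\xi .
\]
This identifies $V_\lambda^*\psi$ as the second displayed formula in the statement. Fubini applies because, by Cauchy--Schwarz in $x$, the inner integral is bounded by $\|e^{-x\xi}\|_{L^2(\widehat v_\lambda)}\,\|\psi(\cdot+i\xi)\|_{L^2(\widehat v_\lambda)}$, and $\|e^{-x\xi}\|^2_{L^2(\widehat v_\lambda)}$ is precisely the factor $1/(c_\lambda\xi^{\lambda+1})$ up to normalization. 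So the inner integral converges absolutely for almost every $\xi>0$, and the whole expression is controlled by $\|\varphi\|\|\psi\|$. On $\xi<0$ the kernel vanishes because of $\chi_{\R_+}$, so only $\xi>0$ contributes.

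The first displayed formula for $V_\lambda^*$ follows from the second by substituting $\dif\widehat v_\lambda = \frac{\lambda+1}{4\pi}x^\lambda\dif x$ and using $\Gamma(\lambda+2)=(\lambda+1)\Gamma(\lambda+1)$:
\[
\sqrt{c_\lambda}\,\frac{\lambda+1}{4\pi} = \sqrt{\frac{2^\lambda(\lambda+1)}{2\pi\Gamma(\lambda+1)}}.
\]

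The main care point is the Fubini justification in the adjoint step, which the Cauchy--Schwarz bound above handles. Everything else is bookkeeping of constants.
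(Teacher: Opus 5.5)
Your proof is correct and follows essentially the same route as the paper: the isometry comes from the Gamma integral $\int_{\R_+} e^{-2x\xi}x^\lambda \dif x = \Gamma(\lambda+1)(2\xi)^{-(\lambda+1)}$, and the adjoint comes from expanding $\langle V_\lambda\varphi,\psi\rangle$ and integrating in $x$ first. You go slightly further than the paper in two places. First, you spell out the image identity from Proposition~\ref{prop:CR-Fourier}, which the paper only invokes later in the proof of Theorem~\ref{thm:Bargmann-diagram}. Second, you justify the use of Fubini, and there the norm is exactly $\|e^{-x\xi}\|^2_{L^2(\R_+,\widehat{v}_\lambda)} = \Gamma(\lambda+2)/(\pi 2^{\lambda+3}\xi^{\lambda+1})$, not just equal to that ``up to normalization''.
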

\begin{proof}
	The operator $V_\lambda$ is clearly linear.	For a given function $\varphi \in L^2(\R_+)$, we compute the norm of $V_\lambda(\varphi)$ in $L^2(\C_+, v_\lambda)$ as follows.
	\begin{align*}
		\|V_\lambda(\varphi)\|^2 &=
			\frac{\lambda + 1}{4\pi}\frac{\pi 2^{\lambda + 3}}{\Gamma(\lambda + 2)}
				\int_{\R_+^2} \xi^{\lambda + 1} e^{-2x\xi}|\varphi(\xi)|^2 x^\lambda 
						\dif x \dif \xi \\
			&= \frac{2^{\lambda + 1}}{\Gamma(\lambda + 1)}
				\int_{\R_+} \xi^{\lambda + 1} |\varphi(\xi)|^2
					\Big(
						\int_{\R_+} e^{-2x\xi} x^\lambda \dif x
					\Big) \dif \xi  \\
			&= \|\varphi\|^2,
	\end{align*}
	where we have used the previously computed Gamma integral involved. This proves that $V_\lambda$ is indeed an isometry.
	
	We now obtain the expression for $V_\lambda^*$ through the following computation. For every $\varphi \in L^2(\R_+)$ and $\psi \in L^2(\C_+,v_\lambda)$ we have
	\begin{align*}
		\langle V_\lambda(\varphi),\psi \rangle &=
			\frac{\lambda + 1}{4\pi} \sqrt{\frac{\pi 2^{\lambda + 3}}{\Gamma(\lambda + 2)}}
				\int_{\R_+^2} \xi^{\frac{1}{2}(\lambda + 1)} e^{-x\xi} \varphi(\xi)
					\overline{\psi(x + i\xi)} x^\lambda \dif x \dif \xi  \\
			&= \sqrt{\frac{2^\lambda (\lambda + 1)}{2\pi \Gamma(\lambda +1)}}
				\int_{\R_+}\varphi(\xi) \xi^{\frac{1}{2}(\lambda + 1)}
					\Big(
						\int_{\R_+} \overline{\psi(x + i \xi)} e^{-x\xi} x^\lambda \dif x
					\Big) \dif \xi,
	\end{align*}
	which yields the first stated expression for $V_\lambda^*$. The second stated expression for $V_\lambda^*$ is obtained from the definition \eqref{eq:widehatv_lambda} of $\widehat{v}_\lambda$.
\end{proof}

\begin{remark}\label{rmk:Vlambda-inclusion}
	Since $V_\lambda : L^2(\R_+) \rightarrow L^2(\C_+, v_\lambda)$ is an isometry, we will consider, from now on, $L^2(\R_+)$ as a Hilbert subspace of $L^2(\C_+, v_\lambda)$. In other words, we will identify $L^2(\R_+)$ with the Hilbert subspace $V_\lambda\big(L^2(\R_+)\big)$ of $L^2(\C_+, v_\lambda)$. We note that, with this convention, the adjoint operator $V_\lambda^* : L^2(\C_+, v_\lambda) \rightarrow L^2(\R_+)$ is thus precisely the corresponding orthogonal projection.
\end{remark}

\begin{theorem}\label{thm:Bargmann-diagram}
	Let us define the operator $\cB_\lambda = \cF_\lambda \circ V_\lambda : L^2(\R_+) \rightarrow \cA^2_\lambda(\C_+)$. Then, $\cB_\lambda$ is well defined, i.e.~$\cB_\lambda\big(L^2(\R_+)\big) = \cA^2_\lambda(C_+)$. Moreover, the following diagram is commutative
	\begin{equation}\label{eq:Bargmann-diagram}
	\begin{tikzcd}
		L^2(\C_+, v_\lambda) \arrow[rr,"\cF_\lambda"] \arrow[d, bend right=60, "V_\lambda^*"']
			&& L^2(\C_+, v_\lambda) \arrow[d, bend left=60, "P_\lambda"] \\
		L^2(\R_+) \arrow[rr,"\cB_\lambda"] \arrow[u, hook', "V_\lambda"'] 
			&& \cA^2(\C_+) \arrow[u, hook]
	\end{tikzcd}
	\end{equation}
	and it satisfies the following properties.
	\begin{enumerate}
		\item The horizontal arrows are unitary.
		\item The upward vertical arrows are natural isometric inclusions.
		\item The downward vertical arrows are orthogonal projections.
	\end{enumerate}
\end{theorem}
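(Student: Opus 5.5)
Most of the statement is already contained in Proposition~\ref{prop:Vlambda} and Remark~\ref{rmk:Vlambda-inclusion}. The plan is to assemble these facts and then check commutativity of the diagram \eqref{eq:Bargmann-diagram} arrow by arrow.

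\emph{Well-definedness and unitarity of $\cB_\lambda$.} Proposition~\ref{prop:Vlambda} gives $(\cF_\lambda \circ V_\lambda)(L^2(\R_+)) = \cA^2_\lambda(\C_+)$, so $\cB_\lambda$ maps onto $\cA^2_\lambda(\C_+)$. It is a composition of the isometry $V_\lambda$ with the unitary $\cF_\lambda = I\otimes\cF$ (unitary by Plancherel), hence an isometry. A surjective isometry is unitary. The top arrow $\cF_\lambda$ is unitary, which settles~(1).

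\emph{Vertical arrows.} The upward arrows are $V_\lambda$, an isometry identified with an inclusion by Remark~\ref{rmk:Vlambda-inclusion}, and the inclusion $\cA^2_\lambda(\C_+)\hookrightarrow L^2(\C_+,v_\lambda)$, which is $P_\lambda^*$. This gives~(2). For the downward arrows, $V_\lambda^*$ is the orthogonal projection onto $V_\lambda(L^2(\R_+))$ under the identification, since $V_\lambda V_\lambda^*$ is the orthogonal projection onto the range of an isometry. $P_\lambda$ is the Bergman projection. This gives~(3).

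\emph{Commutativity.} There are two squares to check: $\cF_\lambda \circ V_\lambda = \iota\circ\cB_\lambda$, which is the definition, and $P_\lambda\circ\cF_\lambda = \cB_\lambda\circ V_\lambda^*$. The second is the only step with content. I would show the operator identity
\[
	\cF_\lambda V_\lambda V_\lambda^* \cF_\lambda^{-1} = P_\lambda^* P_\lambda
\]
on $L^2(\C_+,v_\lambda)$. The left side is the unitary conjugate, by $\cF_\lambda$, of the orthogonal projection onto $V_\lambda(L^2(\R_+))$. It is therefore the orthogonal projection onto $\cF_\lambda V_\lambda(L^2(\R_+)) = \cA^2_\lambda(\C_+)$, which is $P_\lambda^*P_\lambda$.

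Multiplying this identity on the right by $\cF_\lambda$ gives $\cF_\lambda V_\lambda V_\lambda^* = P_\lambda^* P_\lambda \cF_\lambda$. Viewing both sides as maps into $\cA^2_\lambda(\C_+)$ then gives $\cB_\lambda V_\lambda^* = P_\lambda \cF_\lambda$.

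The main (mild) obstacle is bookkeeping: the upward and downward arrows are adjoint to each other, and the orthogonal projection onto the range of a unitary image is the conjugated projection. No integral computation is needed beyond what Proposition~\ref{prop:Vlambda} already supplies.
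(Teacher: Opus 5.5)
Your proposal is correct and follows the same route as the paper: items (1)--(3) and the upward square come directly from Proposition~\ref{prop:Vlambda}, Remark~\ref{rmk:Vlambda-inclusion} and the unitarity of $\cF_\lambda$. For the downward square, the paper only asserts that commutativity follows because the downward arrows are orthogonal projections. You supply the actual reason: $\cF_\lambda V_\lambda V_\lambda^* \cF_\lambda^{-1}$ is the orthogonal projection onto $\cF_\lambda V_\lambda\big(L^2(\R_+)\big) = \cA^2_\lambda(\C_+)$, and this is precisely the identity \eqref{eq:Plambda-Vlambda} that the paper relies on later.
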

\begin{proof}
	First, note that Propositions~\ref{prop:CR-Fourier} and \ref{prop:Vlambda} yield $\big(\cF_\lambda \circ V_\lambda\big)\big(L^2(\R_+)\big) = \cA^2_\lambda(\C_+)$, which implies the first claim. Next, note that the vertical arrows on the right are, by definition, the natural inclusion (upward arrow) and the Bergman orthogonal projection $P_\lambda$.
	
	By Proposition~\ref{prop:Vlambda}, and as noted in Remark~\ref{rmk:Vlambda-inclusion}, the operator $V_\lambda$ is isometric and thus considered as the inclusion $L^2(\R_+) \hookrightarrow L^2(\C_+, v_\lambda)$. With this convention, the operator $V_\lambda^*$ is thus the orthogonal projection from $L^2(\C_+, v_\lambda)$ onto $L^2(\R_+)$. Hence, properties (2) and (3) have now been established.
	
	By construction, the operator $\cF_\lambda$ is unitary. On the other, we already have established the identity $\big(\cF_\lambda \circ V_\lambda\big)\big(L^2(\R_+)\big) = \cA^2_\lambda(\C_+)$. Hence, the operator $\cB_\lambda$ is unitary from $L^2(\R_+)$ onto $\cA^2_\lambda(\C_+)$ because $V_\lambda$ is an isometry. This proves property~(1).
	
	For the diagram above, the previous paragraphs yield the commutativity for the horizontal and upward vertical arrows. Since the downward arrows are orthogonal projections, the full diagram is commutative.
\end{proof}

\begin{definition}\label{def:Bargmann}
	The unitary operator $\cB_\lambda = \cF_\lambda \circ V_\lambda : L^2(\R_+) \rightarrow \cA^2_\lambda(\C_+)$ will be called the \textbf{Bargmann transform for translation invariant operators of the weighted Bergman space $\cA^2_\lambda(\C_+)$}, or simply (in the rest of this work) the \textbf{Bargmann transform}.
\end{definition}

The next result computes the explicit integral expression for the Bargmann transform $\cB_\lambda$ and its inverse.

\begin{corollary}\label{cor:Bargmann-formulas}
	The Bargmann transform $\cB_\lambda : L^2(\R_+) \rightarrow \cA^2_\lambda(\C_+)$ satisfies
	\[
		\big(\cB_\lambda(\varphi)\big)(z) =
			\sqrt{\frac{2^{\lambda + 2}}{\Gamma(\lambda + 2)}}
			\int_{\R_+} \varphi(\xi) \xi^{\frac{1}{2}(\lambda + 1)} e^{-\xi z} \dif \xi,
	\]
	for every $\varphi \in L^2(\R_+)$ belonging to a dense subspace, and for every $z \in \C_+$. On the other hand, the inverse $\cB_\lambda^{-1} : \cA^2_\lambda(\C_+) \rightarrow L^2(\R_+)$ satisfies
	\[
		\big(\cB_\lambda^{-1}(f)\big)(\xi) =
			\sqrt{\frac{2^{\lambda + 2}}{\Gamma(\lambda + 2)}}
			\xi^{\frac{1}{2}(\lambda + 1)} 
			\int_{\C_+} f(z) e^{-\xi \overline{z}} \dif v_\lambda(z),
	\]
	for every $f \in \cA^2_\lambda(\C_+)$ belonging to a dense subspace, and for every $\xi \in \R_+$.
\end{corollary}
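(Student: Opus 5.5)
The plan is to compute both formulas directly from the factorizations supplied by Theorem~\ref{thm:Bargmann-diagram}: $\cB_\lambda = \cF_\lambda \circ V_\lambda$, and, since $\cB_\lambda$ is unitary, $\cB_\lambda^{-1} = \cB_\lambda^* = V_\lambda^* \circ \cF_\lambda^{-1}$ restricted to $\cA^2_\lambda(\C_+)$. In both computations the Fourier transform in the second tensor factor of \eqref{eq:L2-tensordecomposition} passes between the variables $\xi$ and $y$ while $x$ is held fixed. This is the convention already implicit in Proposition~\ref{prop:CR-Fourier}.

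For the forward formula, I take $\varphi$ in the dense subspace of $L^2(\R_+)$ consisting of functions with compact support in $(0,\infty)$. For such $\varphi$ and each fixed $x>0$, the function
\[
\xi \mapsto \xi^{\frac{1}{2}(\lambda+1)} e^{-x\xi}\varphi(\xi)\chi_{\R_+}(\xi)
\]
lies in $L^1(\R)\cap L^2(\R)$. Hence the $L^2$ Fourier transform is given pointwise by the integral formula. Applying Proposition~\ref{prop:Vlambda} and then $\cF$ gives
\[
\big(\cB_\lambda(\varphi)\big)(x+iy) = \frac{1}{\sqrt{2\pi}}\sqrt{\frac{\pi 2^{\lambda+3}}{\Gamma(\lambda+2)}}\int_{\R_+} \varphi(\xi)\xi^{\frac{1}{2}(\lambda+1)} e^{-x\xi}e^{-iy\xi}\dif\xi .
\]
Now $e^{-x\xi}e^{-iy\xi} = e^{-\xi z}$ with $z = x+iy$, and the constants simplify to $\sqrt{2^{\lambda+2}/\Gamma(\lambda+2)}$. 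There is a further point about "every $z$". The right-hand side is holomorphic in $z$, by differentiation under the integral sign, which is justified by the compact support of $\varphi$. It also agrees almost everywhere with the element $\cB_\lambda(\varphi)\in\cA^2_\lambda(\C_+)$. Two holomorphic functions that agree almost everywhere agree everywhere, so the identity holds for every $z \in \C_+$.

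For the inverse, I take $f$ in the dense subspace $\cA^2_\lambda(\C_+)\cap L^1(\C_+,v_\lambda)$. This subspace contains, for instance, $\cB_\lambda$ of smooth functions compactly supported in $(0,\infty)$, since integrating by parts in $\xi$ gives decay in $z$. First apply $\cF^{-1}$ in $y$:
\[
(\cF_\lambda^{-1}f)(x+i\xi)=\frac{1}{\sqrt{2\pi}}\int_\R f(x+iy)e^{i\xi y}\dif y .
\]
Then apply the formula for $V_\lambda^*$ from Proposition~\ref{prop:Vlambda}, which yields an iterated integral against $e^{-x\xi}\dif\widehat v_\lambda(x)\dif y$. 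Fubini, justified by $f\in L^1(v_\lambda)$, combines this into a single integral over $\C_+$ against $\dif v_\lambda(z)=\dif\widehat v_\lambda(x)\dif y$. Since $e^{-x\xi}e^{i\xi y} = e^{-\xi\overline z}$, and the constant is again $\sqrt{2^{\lambda+2}/\Gamma(\lambda+2)}$, this gives the stated formula.

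I expect the main obstacle to be the analytic bookkeeping rather than the algebra. One needs the $L^2$ Fourier transform to coincide with the integral formula fiberwise for almost every $x$. One needs the chosen dense subspaces to make all integrals absolutely convergent so that Fubini applies. And the forward formula must be upgraded from almost everywhere to everywhere, which the holomorphy argument above handles. If checking that $\cA^2_\lambda\cap L^1(v_\lambda)$ is dense proves awkward, I would fall back on a weak formulation: pair $\cB_\lambda^*f$ with test functions $\varphi$, insert the forward formula, and use Fubini there.
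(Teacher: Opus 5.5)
Your proposal is correct and follows essentially the same route as the paper. Both formulas come from writing $\cB_\lambda=\cF_\lambda\circ V_\lambda$ and $\cB_\lambda^{-1}=V_\lambda^*\circ\cF_\lambda^{-1}$, inserting the expressions from Proposition~\ref{prop:Vlambda} and the (inverse) Fourier integral, and simplifying the constants and exponentials. Your additions supply details the paper leaves implicit: the explicit dense subspaces, the Fubini justification via $f\in L^1(\C_+,v_\lambda)$, and the holomorphy argument upgrading the forward identity from almost every $z$ to every $z\in\C_+$.
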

\begin{proof}
	For every $\varphi \in L^2(\R_+)$, in the dense subspace where the Fourier integral below converges, and for $z = x + iy \in \C_+$, we have
	\begin{align*}
		\big(\cB_\lambda(\varphi)\big)(z) 
			&= \big(\cF_\lambda \circ V_\lambda (\varphi) \big)(x + i y)  \\
			&= \frac{1}{\sqrt{2\pi}} \int_\R \big(V_\lambda(\varphi)\big)(x + i\xi) 
					e^{-iy\xi} \dif \xi  \\
			&= \frac{1}{\sqrt{2\pi}} \sqrt{\frac{\pi 2^{\lambda + 3}}{\Gamma(\lambda + 2)}}
				\int_{\R_+} \xi^{\frac{1}{2}(\lambda + 1)} e^{-x\xi} \varphi(\xi) 
					e^{-iy\xi} \dif \xi  \\
			&= \sqrt{\frac{2^{\lambda + 2}}{\Gamma(\lambda + 2)}}
				\int_{\R_+} \varphi(\xi) \xi^{\frac{1}{2}(\lambda + 1)} e^{-\xi z} \dif \xi.
	\end{align*}
	On the other hand, we have for $f \in \cA^2_\lambda(\C_+)$, in the dense subspace where the inverse Fourier integral converges, and $\xi \in \R_+$, the following computation
	\begin{align*}
		\big(\cB_\lambda^{-1}(f)\big)(\xi) 
			&= \big(V_\lambda^* \circ \cF_\lambda^{-1}(f)\big)(\xi)  \\
			&= \sqrt{\frac{\pi 2^{\lambda + 3}}{\Gamma(\lambda + 2)}}
					\xi^{\frac{1}{2}(\lambda + 1)}
					\int_{\R_+} \big(\cF_\lambda^{-1}(f)\big)(x + i\xi) 
						e^{-x\xi} \dif \widehat{v}(x)  \\
			&= \sqrt{\frac{\pi 2^{\lambda + 3}}{\Gamma(\lambda + 2)}}
					\xi^{\frac{1}{2}(\lambda + 1)}
					\frac{1}{\sqrt{2\pi}}
					\int_{\R_+ \times \R} f(x + iy) e^{-x\xi}
						e^{iy\xi} \dif \widehat{v}(x) \dif y \\
			&= \sqrt{\frac{2^{\lambda + 2}}{\Gamma(\lambda + 2)}}
					\xi^{\frac{1}{2}(\lambda + 1)}
					\int_{\R_+ \times \R} f(x + iy) e^{-\xi(x - iy)} 
							\dif \widehat{v}_\lambda(x) \dif y \\
			&= \sqrt{\frac{2^{\lambda + 2}}{\Gamma(\lambda + 2)}}
					\xi^{\frac{1}{2}(\lambda + 1)} 
					\int_{\C_+} f(z) e^{-\xi \overline{z}} \dif v_\lambda(z).
	\end{align*}
\end{proof}

\begin{remark}\label{rmk:Bargmann-formulas}
	We note that Corollary~\ref{cor:Bargmann-formulas} essentially recovers the formulas of  so-called Bargmann type transforms from \cite{GKVParabolic}. More precisely, Theorem~2.3 and Corollary~2.4 from \cite{GKVParabolic} construct unitary maps that correspond to our Bargmann transform $\cB_\lambda$ and its inverse $\cB_\lambda^{-1}$, respectively. Our formulas and those found in \cite{GKVParabolic} are basically the same except for two facts: a different normalization of the measure $v_\lambda$ and our use of the right half-plane instead of the upper half-plane as found in \cite{GKVParabolic}.
	
	On the other hand, our Bargmann transform $\cB_\lambda$ is obtained through a single unitary operator: a Fourier transform on the imaginary part. While, the Bargmann type transform considered in \cite{GKVParabolic} requires the introduction of an additional unitary transformation and some auxiliary functions (see~section~2 of the aforementioned reference). Our construction not only leads to simpler proof and presentation, but it also allows to work in the same ambient Hilbert space $L^2(\C_+, v_\lambda)$, as one easily verifies from the commutative diagram in Theorem~\ref{thm:Bargmann-diagram}. In comparison, \cite{GKVParabolic} replaces the ambient Hilbert space with $L^2(\R \times \R_+, \dif x \dif y)$, thus loosing some of the symmetry observed in Theorem~\ref{thm:Bargmann-diagram}, as well as in Corollary~\ref{cor:Bargmann-formulas}. 
	
	As we will see in the sections that follow, our approach allows to more easily make use of representation theoretic methods (not considered in \cite{GKVParabolic}) to obtain new applications to the more general Toeplitz operators introduced in section~\ref{sec:BergmanToeplitz}. Recall that, for the latter, a symbol is in general a bounded operator and thus not necessarily a multiplier operator.
\end{remark}

\section{Intertwining property of the Bargmann transform}
\label{sec:BargmannIntertwining}
We will consider two fundamental unitary representation of $\R$ on $L^2(\C_+, v_\lambda)$. These will be defined and denoted as follows in the rest of this work.
\begin{align*}
	\tau : \R \times L^2(\C_+, v_\lambda) &\rightarrow L^2(\C_+, v_\lambda) &
	\rho : \R \times L^2(\C_+, v_\lambda) &\rightarrow L^2(\C_+, v_\lambda) \\	
	\tau(t,f)(z) = \big(\tau_t(f)\big)(z) &= f(z - it) &
	\rho(t,\psi)(x + i\xi) = \big(\rho_t(\psi)\big)(z) &= e^{it\xi} \psi(x + i\xi).
\end{align*}
Note that the measure $v_\lambda$ is invariant under the translation by $it$ when $t \in \R$, which is the reason for $\tau$ to be a unitary representation. Also, multiplication by the character $\xi \mapsto e^{it\xi}$ clearly leaves invariant the inner product of $L^2(\C_+,v_\lambda)$, and so $\rho$ is unitary as well. The required continuity of the maps $\tau$ and $\rho$ are easily proved by standard methods.

Recall that $L^2(\R_+)$ is considered as a Hilbert subspace of $L^2(\C_+, v_\lambda)$ through the isometry $V_\lambda$.

\begin{lemma}\label{lem:cA-L2R-invariance}
	The Hilbert subspaces $L^2(\R_+)$ and $\cA^2_\lambda(\C_+)$ of $L^2(\C_+, v_\lambda)$ are invariant under the unitary representations $\rho$ and $\tau$, respectively.
\end{lemma}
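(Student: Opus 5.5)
Both invariances can be checked directly from the definitions, one subspace at a time. Throughout, $L^2(\R_+)$ is identified with $V_\lambda\big(L^2(\R_+)\big)$ as in Remark~\ref{rmk:Vlambda-inclusion}.

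\emph{Invariance of $\cA^2_\lambda(\C_+)$ under $\tau$.} Let $f \in \cA^2_\lambda(\C_+)$ and $t \in \R$. The function $z \mapsto f(z - it)$ is holomorphic on $\C_+$, since $z \mapsto z - it$ is a biholomorphism of $\C_+$ onto itself: it preserves the real part. Because $\tau_t$ is unitary on $L^2(\C_+, v_\lambda)$, the function $\tau_t(f)$ also belongs to $L^2(\C_+, v_\lambda)$. Hence $\tau_t(f) \in \cA^2_\lambda(\C_+)$.

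\emph{Invariance of $L^2(\R_+)$ under $\rho$.} Take $\psi = V_\lambda(\varphi)$ with $\varphi \in L^2(\R_+)$. By the formula in Proposition~\ref{prop:Vlambda}, multiplying by $e^{it\xi}$ only changes the factor $\varphi(\xi)$. Explicitly,
\[
	\rho_t\big(V_\lambda(\varphi)\big) = V_\lambda\big(e_t\varphi\big),
	\qquad e_t(\xi) = e^{it\xi},
\]
and $e_t\varphi \in L^2(\R_+)$ because $|e_t| = 1$. Therefore $\rho_t\big(V_\lambda(L^2(\R_+))\big) \subset V_\lambda(L^2(\R_+))$. The same identity also shows that, restricted to $L^2(\R_+)$, the representation $\rho$ is multiplication by characters.

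As a consistency check, one could also derive the $\tau$-invariance from the $\rho$-invariance. The key relation would be $\cF_\lambda \circ \rho_t = \tau_{\pm t} \circ \cF_\lambda$, since the Fourier transform turns modulation in $\xi$ into translation in $y$; this yields $\tau$-invariance of $\cF_\lambda V_\lambda(L^2(\R_+)) = \cA^2_\lambda(\C_+)$ by Theorem~\ref{thm:Bargmann-diagram}. Keeping track of the sign here is the only delicate point. However, this route is not needed, because the direct argument above is immediate. No step presents a real obstacle; the only care needed is with measure-zero issues, which disappear because all identities hold almost everywhere.
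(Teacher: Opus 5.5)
Your proof is correct and follows the same route as the paper. You read off the $\rho$-invariance of $V_\lambda\big(L^2(\R_+)\big)$ from the formula for $V_\lambda$, where your identity $\rho_t \circ V_\lambda = V_\lambda \circ M_{e_t}$ makes the paper's ``follows easily'' explicit, and you get the $\tau$-invariance of $\cA^2_\lambda(\C_+)$ from the fact that $z \mapsto z - it$ preserves $\C_+$, holomorphy and $v_\lambda$.
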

\begin{proof}
	The $\rho$-invariance of $L^2(\R_+)$ follows easily from the expression of $V_\lambda$ (see~Proposition~\ref{prop:Vlambda}). It is also clear that $\cA^2_\lambda(\C_+)$ is invariant under composition with translation by $it$, for $t \in \R$, and so it is invariant under $\tau$.
\end{proof}

\begin{remark}\label{rmk:L2R-invariance}
	We observe that the unitary representation of $\R$ induced on $L^2(\R_+)$ from the $\rho$-invariance stated in Lemma~\ref{lem:cA-L2R-invariance} is the well-known representation obtained by character multiplication
	\begin{align*}
		\R \times L^2(\R_+) &\rightarrow L^2(\R_+)  \\
		\big(t\cdot g\big)(\xi) &= e^{it\xi} g(\xi).
	\end{align*}
	This is seen easily from the expression of $V_\lambda$. This unitary representation of $\R$ on $L^2(\R_+)$ will be denoted with the same symbol $\rho$. Furthermore, the latter is the restriction of a corresponding unitary representation of $\R$ on $L^2(\R)$ obtained with the same formula. Hence, we will also denote this representation of $\R$ on $L^2(\R)$ with the symbol $\rho$.
\end{remark}

To abbreviate the notation corresponding to invariance with respect to $\tau$ and $\rho$ as considered in Lemma~\ref{lem:cA-L2R-invariance} and Remark~\ref{rmk:L2R-invariance}, we introduce the following.

\begin{definition}\label{def:invariance}
	A Hilbert subspace $\cH$ of either $\cA^2_\lambda(\C_+)$ or $L^2(\C_+, v_\lambda)$ will be called \textbf{translation invariant} when $\tau_t(\cH) = \cH$, for every $t \in \R$, i.e.~when it is invariant under the unitary representation $\tau$. Correspondingly, a Hilbert subspace $\cH$ of either $L^2(\R_+)$ or $L^2(\C_+, v_\lambda)$ will be called \textbf{character invariant} when $\rho_t(\cH) = \cH$, for every $t \in \R$, i.e.~when it is invariant under the unitary representation $\rho$.
\end{definition}

It is very well known that the Fourier transform intertwines translations and character multiplication. This leads to the next immediate consequence.

\begin{lemma}\label{lem:Flambda-intertwining}
	With our current notation, we have
	\[
		\tau_t \circ \cF_\lambda = \cF_\lambda \circ \rho_t,
	\]
	for every $t \in \R$, as operators of $L^2(\C_+, v_\lambda)$.
\end{lemma}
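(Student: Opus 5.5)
Both $\tau_t$ and $\rho_t$ act only on the second tensor factor of the decomposition \eqref{eq:L2-tensordecomposition}, while the first factor $L^2(\R_+, \widehat{v}_\lambda)$ is left alone. So I will reduce the claim to the one-variable identity on $L^2(\R)$
\[
	T_t \circ \cF = \cF \circ m_t, \qquad (T_t g)(y) = g(y - t), \quad (m_t h)(\xi) = e^{it\xi} h(\xi).
\]
Indeed, $\tau_t = I \otimes T_t$ and $\rho_t = I \otimes m_t$, because translation by $it$ changes only the imaginary part variable, and the character $e^{it\xi}$ depends only on $\xi$. Since $\cF_\lambda = I \otimes \cF$, the identity of elementary tensor operators $(I \otimes T_t)(I \otimes \cF) = I \otimes (T_t \cF) = I \otimes (\cF m_t) = (I\otimes \cF)(I \otimes m_t)$ then gives the statement.

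To prove the one-variable identity, I take $h \in L^1(\R) \cap L^2(\R)$ and compute directly:
\[
	\big(\cF(m_t h)\big)(y) = \frac{1}{\sqrt{2\pi}} \int_\R e^{it\xi} h(\xi) e^{-iy\xi} \dif \xi
		= \frac{1}{\sqrt{2\pi}} \int_\R h(\xi) e^{-i(y - t)\xi} \dif \xi = \big(\cF h\big)(y - t) = \big(T_t \cF h\big)(y).
\]
Both sides are bounded (indeed unitary) operators that agree on the dense subspace $L^1(\R) \cap L^2(\R)$. Hence they agree on all of $L^2(\R)$.

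Alternatively, I can avoid tensor-product language and verify the identity directly on $L^2(\C_+, v_\lambda)$. Take $\psi$ in the dense subspace of functions for which $\xi \mapsto \psi(x+i\xi)$ is integrable for almost every $x$; finite sums of products $a(x)b(\xi)$ with $a \in L^2(\R_+,\widehat{v}_\lambda)$ and $b \in L^1 \cap L^2(\R)$ suffice. For such $\psi$, the same computation applied in the variable $\xi$ with $x$ held fixed gives $\big(\cF_\lambda \rho_t \psi\big)(x + iy) = \big(\cF_\lambda \psi\big)(x + i(y - t)) = \big(\tau_t \cF_\lambda \psi\big)(x+iy)$. Since $\tau_t$, $\rho_t$ and $\cF_\lambda$ are unitary, the identity extends by density and continuity.

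The only step needing any care is this density/extension argument, together with the identification $\tau_t = I \otimes T_t$. Both are routine, because every operator involved is unitary and the spanning set of elementary tensors is dense. I do not expect a genuine obstacle; the sign convention $f(z - it)$ matches $e^{+it\xi}$ in $\rho_t$, as the computation above confirms.
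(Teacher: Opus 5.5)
Your proof is correct and follows the same route as the paper. The paper states the lemma as an immediate consequence of the standard fact that $\cF$ intertwines translations with character multiplication, transferred to $L^2(\C_+, v_\lambda)$ through $\cF_\lambda = I \otimes \cF$. You supply the one-variable computation and the density extension that the paper leaves implicit, and your sign conventions for $\tau_t$ and $\rho_t$ check out.
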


Lemma~\ref{lem:Flambda-intertwining} induces a corresponding intertwining property for the Bargmann transform $\cB_\lambda$, whose proof illustrates the formulas obtained in Corollary~\ref{cor:Bargmann-formulas}.

\begin{corollary}\label{cor:Bargmann-intertwining}
	The Bargmann transform $\cB_\lambda : L^2(\R_+) \rightarrow \cA^2_\lambda(\C_+)$ intertwines the unitary representations $\rho$ and $\tau$ on $L^2(\R_+)$ and $\cA^2(\C_+, v_\lambda)$, respectively. More precisely, we have
	\[
		\tau_t \circ \cB_\lambda = \cB_\lambda \circ \rho_t,
	\]
	for every $t \in \R$.
\end{corollary}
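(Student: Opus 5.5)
Since the text says the proof "illustrates the formulas obtained in Corollary~\ref{cor:Bargmann-formulas}", I will give a direct computation with the integral formula for $\cB_\lambda$ on a dense subspace and extend by continuity. I will also check it against the structural argument $\cB_\lambda = \cF_\lambda \circ V_\lambda$ combined with Lemma~\ref{lem:Flambda-intertwining}.

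For the structural argument, note first that $V_\lambda$ intertwines the representation $\rho$ on $L^2(\R_+)$ (Remark~\ref{rmk:L2R-invariance}) with $\rho$ on $L^2(\C_+, v_\lambda)$. Indeed, the formula of Proposition~\ref{prop:Vlambda} gives
\[
	\big(V_\lambda(\rho_t\varphi)\big)(x+i\xi) = e^{it\xi}\big(V_\lambda(\varphi)\big)(x+i\xi),
\]
since the kernel $\xi^{\frac12(\lambda+1)}e^{-x\xi}\chi_{\R_+}(\xi)$ only multiplies. Hence $V_\lambda\circ\rho_t = \rho_t\circ V_\lambda$. Applying $\cF_\lambda$ and Lemma~\ref{lem:Flambda-intertwining} then yields
\[
	\cB_\lambda\circ\rho_t = \cF_\lambda\circ\rho_t\circ V_\lambda = \tau_t\circ\cF_\lambda\circ V_\lambda = \tau_t\circ\cB_\lambda.
\]
The right-hand side maps into $\cA^2_\lambda(\C_+)$ by Lemma~\ref{lem:cA-L2R-invariance}.

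For the computational version, take $\varphi$ in the dense subspace of Corollary~\ref{cor:Bargmann-formulas}, which is preserved by $\rho_t$ because $|\rho_t\varphi| = |\varphi|$. Then
\[
	\big(\cB_\lambda(\rho_t\varphi)\big)(z) = c_\lambda\int_{\R_+} \varphi(\xi)\xi^{\frac12(\lambda+1)}e^{-\xi(z-it)}\dif\xi = \big(\cB_\lambda\varphi\big)(z-it) = \big(\tau_t\cB_\lambda\varphi\big)(z).
\]
Both sides of the identity are bounded operators, since $\cB_\lambda$ is unitary by Theorem~\ref{thm:Bargmann-diagram} and $\rho_t$, $\tau_t$ are unitary. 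The identity therefore extends from the dense subspace to all of $L^2(\R_+)$.

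The only delicate point is the dense-subspace and continuity bookkeeping: one must confirm that $\rho_t$ preserves the subspace on which the integral formula holds. This is immediate because $\rho_t$ is multiplication by a unimodular function. Alternatively, the structural argument avoids the issue entirely.
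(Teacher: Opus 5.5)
Your proposal is correct. Your computational version is exactly the paper's proof: substitute $z-it$ into the integral formula of Corollary~\ref{cor:Bargmann-formulas} on a dense subspace, and you additionally make explicit the $\rho_t$-invariance of that subspace and the extension by continuity, both of which the paper leaves implicit. Your structural argument, $V_\lambda\circ\rho_t=\rho_t\circ V_\lambda$ combined with Lemma~\ref{lem:Flambda-intertwining}, is also valid and avoids the dense-subspace bookkeeping entirely; it is arguably what the paper means when it says the lemma ``induces'' the corollary.
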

\begin{proof}
	From Corollary~\ref{cor:Bargmann-formulas}, for every $\varphi$ in some dense subspace of $L^2(\R_+)$ and $t \in \R$, we have
	\begin{align*}
		\big(\tau_t \circ \cB_\lambda(\varphi)\big)(z) 
			&= \big(\cB_\lambda(\varphi)\big)(z - it) \\
			&= \sqrt{\frac{2^{\lambda + 2}}{\Gamma(\lambda + 2)}}
					\int_{\R_+} \varphi(\xi) \xi^{\frac{1}{2}(\lambda + 1)} 
					e^{-\xi (z - it)} \dif \xi  \\
			&= \sqrt{\frac{2^{\lambda + 2}}{\Gamma(\lambda + 2)}}
					\int_{\R_+} e^{it\xi}\varphi(\xi) \xi^{\frac{1}{2}(\lambda + 1)} 
					e^{-\xi z} \dif \xi \\
			&= \big(\cB_\lambda \circ \rho_t(\varphi)\big)(z),
	\end{align*}
	for every $z \in \C_+$.
\end{proof}

Using the properties of Bargmann transform $\cB_\lambda$ established so far, we can now describe the translation invariant subspaces of $\cA^2_\lambda(\C_+)$.

\begin{theorem}\label{thm:cA-invariant-subspaces}
	A Hilbert subspace $\cH \subset \cA^2_\lambda(\C_+)$ is translation invariant (see~Definition~\ref{def:invariance}) if and only if there exists a measurable subset $E \subset \R_+$ such that
	\[
		\cH = \cB_\lambda\big(L^2(E)\big).
	\]
	Furthermore, this identity establishes a one-to-one correspondence between measurable subsets (up to measure zero sets) of $\R_+$ and translation invariant Hilbert subspaces of $\cA^2_\lambda(\C_+)$.
\end{theorem}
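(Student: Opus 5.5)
By Corollary~\ref{cor:Bargmann-intertwining}, the unitary $\cB_\lambda$ intertwines $\rho$ on $L^2(\R_+)$ with $\tau$ on $\cA^2_\lambda(\C_+)$. Hence a closed subspace $\cH \subset \cA^2_\lambda(\C_+)$ is translation invariant exactly when $\cB_\lambda^{-1}(\cH)$ is character invariant in $L^2(\R_+)$. The problem therefore reduces to a statement purely about $L^2(\R_+)$: the closed subspaces invariant under all multiplications by $e^{it\xi}$, $t\in\R$, are precisely the spaces $L^2(E)$ with $E \subset \R_+$ measurable, and $E$ is determined up to null sets. The first step is to record this reduction, using $\tau_t\circ\cB_\lambda = \cB_\lambda\circ\rho_t$ and the unitarity of $\cB_\lambda$.

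For the easy direction, $L^2(E)$ is clearly invariant under multiplication by characters, since $e^{it\xi}\chi_E g$ still vanishes off $E$. For injectivity, the orthogonal projection onto $L^2(E)$ is the multiplier $M_{\chi_E}$. If $L^2(E)=L^2(F)$, then $M_{\chi_E}=M_{\chi_F}$, which forces $\chi_E=\chi_F$ almost everywhere. So it only remains to show that every character invariant $\cH_0$ has the form $L^2(E)$.

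This converse is the main step. Let $P$ be the orthogonal projection onto $\cH_0$. Since $\cH_0$ is invariant under the unitaries $\rho_t$ and under $\rho_t^*=\rho_{-t}$, $P$ commutes with every $\rho_t = M_{e^{it\xi}}$. Next I would show that $P$ commutes with $M_h$ for every $h\in L^\infty(\R_+)$.
\begin{itemize}
\item Finite linear combinations of characters, and then their weak$^*$ closure, commute with $P$.
\item For $h\in C_c(\R_+)$, approximate $h$ uniformly on compacts by trigonometric polynomials (Stone--Weierstrass on a large interval), or write $h$ as the Fourier transform of an $L^1$ function $\check h$ on $\R$, so that $M_h=\int \check h(t)\rho_t\,dt$ as a strongly convergent integral.
\item Pass to general bounded Borel $h$ by bounded pointwise limits, using the dominated convergence theorem in the strong operator topology.
\end{itemize}
Because the multiplication algebra $\{M_h : h \in L^\infty(\R_+)\}$ is maximal abelian in $B(L^2(\R_+))$, its commutant is itself. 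Hence $P=M_h$ for some $h$. From $P^2=P=P^*$ we get $h=\chi_E$ for a measurable $E$, and so $\cH_0=L^2(E)$.

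I expect the main obstacle to be this density/commutant step, namely showing that the von Neumann algebra generated by $\{\rho_t\}$ is all of $L^\infty(\R_+)$. I would cite the standard facts: the maximal abelian property of $L^\infty$ acting on $L^2$, and the fact that characters generate $L^\infty(\R)$ as a von Neumann algebra, which follows from Fourier inversion on $L^1(\R)$. Finally, translating back via $\cB_\lambda$ gives $\cH=\cB_\lambda(L^2(E))$ together with the one-to-one correspondence.
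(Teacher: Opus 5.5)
Your proposal is correct and follows essentially the paper's route. The paper also reduces, via $\tau_t\circ\cB_\lambda=\cB_\lambda\circ\rho_t$ and the unitarity of $\cB_\lambda$, to character-invariant closed subspaces of $L^2(\R_+)$; it then cites Rudin's description of such subspaces of $L^2(\R)$, restricted to $L^2(\R_+)$, whereas you prove that step directly using the maximal abelian property of $\{M_h : h\in L^\infty(\R_+)\}$.

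One minor fix in your second bullet: not every $h\in C_c(\R_+)$ is the Fourier transform of an $L^1$ function. Take $h\in C_c^\infty(\R_+)$ there, or use uniformly bounded Fej\'er approximants with growing periods, before passing to bounded pointwise limits.
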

\begin{proof}
	A well-known proof of the description of translation invariant Hilbert subspaces of $L^2(\R)$ shows that the Hilbert subspaces of $L^2(\R)$ invariant under $\rho$ (i.e.~character multiplication) are precisely those of the form $L^2(E)$ for $E \subset \R$ measurable (see~\cite[subsection~9.16]{RudinRCAnalysis}). This result holds for $L^2(\R_+)$ as well by the $\rho$-invariance of this subspace. Since $\cB_\lambda$ is unitary, the intertwining property established in Corollary~\ref{cor:Bargmann-intertwining} completes the proof.
\end{proof}

\begin{corollary}\label{cor:cA-invariant-subspaces}
	For every translation invariant subspace $\cH$ of $\cA^2_\lambda(\C_+)$ there exists a measurable subset $E \subset \R_+$, unique up to measure zero sets, such that $\cH$ is the closure of the space of functions of the form
	\[
		z \longmapsto 
			\int_E \varphi(\xi) \xi^{\frac{1}{2}(\lambda + 1)} 
				e^{-\xi z} \dif \xi
	\]
	where $\varphi \in C_c(\R_+) \chi_E = \{ g\chi_E: g \in C_c(\R_+) \}$.
\end{corollary}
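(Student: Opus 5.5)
The plan is to combine Theorem~\ref{thm:cA-invariant-subspaces} with the integral formula for $\cB_\lambda$ from Corollary~\ref{cor:Bargmann-formulas}, together with a density argument.

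First, given a translation invariant subspace $\cH$ of $\cA^2_\lambda(\C_+)$, Theorem~\ref{thm:cA-invariant-subspaces} provides a measurable $E \subset \R_+$ with $\cH = \cB_\lambda\big(L^2(E)\big)$. Uniqueness of $E$ up to null sets is part of the one-to-one correspondence stated there: if $\cB_\lambda(L^2(E)) = \cB_\lambda(L^2(E'))$, then $L^2(E) = L^2(E')$ because $\cB_\lambda$ is injective, and hence $\chi_E = \chi_{E'}$ almost everywhere.

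Second, I will show that $C_c(\R_+)\chi_E$ is dense in $L^2(E)$, where $L^2(E)$ is viewed as the subspace of $L^2(\R_+)$ of functions vanishing off $E$. Given $f \in L^2(E)$, choose $g_n \in C_c(\R_+)$ with $g_n \to f$ in $L^2(\R_+)$. Since multiplication by $\chi_E$ is an orthogonal projection, it follows that $g_n\chi_E \to f\chi_E = f$. Because $\cB_\lambda$ is unitary, $\cB_\lambda\big(C_c(\R_+)\chi_E\big)$ is then dense in $\cB_\lambda(L^2(E)) = \cH$. As $\cH$ is closed, $\cH$ equals the closure of $\cB_\lambda\big(C_c(\R_+)\chi_E\big)$.

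Third, and this is the step needing the most care, I must check that for $\varphi = g\chi_E$ with $g \in C_c(\R_+)$, the value $\cB_\lambda(\varphi)$ is given pointwise by the integral formula of Corollary~\ref{cor:Bargmann-formulas}. That formula was asserted only on a dense subspace where the Fourier integral converges, so I will verify directly that $C_c(\R_+)\chi_E$ qualifies. Such a $\varphi$ is bounded with compact support in $(0,\infty)$. Hence $\xi \mapsto V_\lambda(\varphi)(x+i\xi)$ lies in $L^1(\R)\cap L^2(\R)$ for each $x>0$, so $\cF_\lambda$ acts by the genuine Fourier integral in the second tensor variable, and the computation in the proof of Corollary~\ref{cor:Bargmann-formulas} applies verbatim. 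The result is the function
\[
	z \longmapsto \sqrt{\frac{2^{\lambda + 2}}{\Gamma(\lambda + 2)}} \int_E g(\xi)\, \xi^{\frac{1}{2}(\lambda + 1)} e^{-\xi z} \dif \xi,
\]
where the integral converges absolutely and is holomorphic in $z$. Strictly speaking, the Fourier identity holds for almost every $y$. Both sides are continuous in $z$, however: the left side because it lies in $\cA^2_\lambda(\C_+)$, and the right side because it is an absolutely convergent holomorphic integral. So the identity holds everywhere.

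Finally, the positive constant $\sqrt{2^{\lambda+2}/\Gamma(\lambda+2)}$ does not affect the closed linear span, which yields the description in the statement.
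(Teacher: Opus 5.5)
Your proposal is correct and follows essentially the same route as the paper. You invoke Theorem~\ref{thm:cA-invariant-subspaces} to get $\cH = \cB_\lambda(L^2(E))$, use that $C_c(\R_+)\chi_E$ is dense in $L^2(E)$, and note that the integral formula of Corollary~\ref{cor:Bargmann-formulas} holds on $C_c(\R_+)\chi_E$ because the Fourier integral of $V_\lambda(\varphi)$ converges there. You also supply details the paper leaves implicit: the $L^1 \cap L^2$ slice argument, upgrading the a.e.\ identity to a pointwise one by continuity, the harmless normalizing constant, and the uniqueness of $E$.
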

\begin{proof}
	For a measurable subset $E \subset \R_+$, we note that $C_c(\R_+)\chi_E$ is dense in $L^2(E)$ and the formula for $\cB_\lambda$ from Corollary~\ref{cor:Bargmann-formulas} applies for functions in the subspace $C_c(\R_+)\chi_E$; the reason is that, for $\varphi \in C_c(\R_+)$ the Fourier transform in the variable $\xi$ of $V_\lambda(\varphi)$ (where $w = x + i\xi \in \C_+$) can be expressed using a Fourier integral. Hence, the result is a consequence of Theorem~\ref{thm:cA-invariant-subspaces}.
\end{proof}

\begin{remark}\label{rmk:cA-invariant-subspaces}
	We observe that the conclusion of Corollary~\ref{cor:cA-invariant-subspaces} still holds if we replace $C_c(\R_+)\chi_E$ with some other dense subspace of $L^2(E)$ whose elements $\varphi$ are such that $V_\lambda(\varphi)$ has a Fourier transform in the variable $\xi$ (where $w = x + i\xi \in \C_+$) that can be computed through the usual Fourier integral formula. We also note that, in all such cases, for any translation invariant subspace $\cH$ of $\cA^2_\lambda(\C_+)$ there exists a measurable subset $E \subset \R_+$ such that $\cH$ is the closure of the image of the integral operator that integrates only over $E$ the formula for $\cB_\lambda$ found in Corollary~\ref{cor:Bargmann-formulas}.
\end{remark}

\section{Translation and character invariant operators}
\label{sec:TranslationInvariantOperators}
The notion introduced in Definition~\ref{def:invariance} for subspaces has the next analogue for operators. Also, recall Remark~\ref{rmk:L2R-invariance} and the observations of the first paragraph of section~\ref{sec:BargmannIntertwining}.

\begin{definition}\label{def:Operatorinvariance}
	A bounded operator $T$ of either $\cA^2_\lambda(\C_+)$ or $L^2(\C_+, v_\lambda)$ will be called \textbf{translation invariant} when it intertwines the unitary representation $\tau$, in other words, if
	\[
		\tau_t \circ T = T \circ \tau_t,
	\]
	for every $t \in \R$. Correspondingly, a bounded operator $T$ on either $L^2(\R_+)$, $L^2(\R)$ or $L^2(\C_+, v_\lambda)$ will be called \textbf{character invariant} when it intertwines the unitary representation $\rho$, in other words, if
	\[
		\rho_t \circ T = T \circ \rho_t,
	\]
	for every $t \in \R$. We will denote with $\mfT(\cH)$ and $\mfC(\cH)$ the translation invariant and character invariant bounded operators, respectively, of $\cH$, for the corresponding cases of the Hilbert space $\cH$ considered in this definition.
\end{definition}

\begin{remark}\label{rmk:OperatorInvariance}
	The spaces $\mfT(\cH)$ and $\mfC(\cH)$, considered according to the choice of the Hilbert space $\cH$, are clearly von Neumann subalgebras of the algebra of bounded operators of $\cH$. The reason is that they are defined as spaces of intertwining operators of unitary representations.
\end{remark}

The following result will be useful latter on, but it is interesting by its own right.

\begin{proposition}\label{prop:ToepOpInvariance}
	If $A \in \mfT\big(L^2(\C_+, v_\lambda)\big)$, i.e.~it is a translation invariant bounded operator, then the Toeplitz operator $T^{(\lambda)}_A$ belongs to $\mfT\big(\cA^2_\lambda(\C_+)\big)$, i.e.~it is translation invariant as well.
\end{proposition}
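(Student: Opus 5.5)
The plan is to use the expression $T^{(\lambda)}_A = P_\lambda \circ A \circ P_\lambda^*$ from section~\ref{sec:BergmanToeplitz} and to show that each of the three factors intertwines the representation $\tau$, taken on $L^2(\C_+, v_\lambda)$ or on $\cA^2_\lambda(\C_+)$ as appropriate.

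First, $\cA^2_\lambda(\C_+)$ is $\tau$-invariant by Lemma~\ref{lem:cA-L2R-invariance}. The restriction of $\tau$ to this subspace is precisely the representation on $\cA^2_\lambda(\C_+)$ used in Definition~\ref{def:Operatorinvariance}. Consequently the inclusion $P_\lambda^*$ satisfies $\tau_t \circ P_\lambda^* = P_\lambda^* \circ \tau_t$ for every $t \in \R$; this is immediate.

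Second, I will show that $P_\lambda$ commutes with $\tau_t$. Since each $\tau_t$ is unitary on $L^2(\C_+, v_\lambda)$ and leaves $\cA^2_\lambda(\C_+)$ invariant for all $t$, we have $\tau_t^* = \tau_{-t}$, so the adjoint also leaves the subspace invariant. A closed subspace that is invariant under both a unitary and its adjoint is reducing. Hence the orthogonal complement $\cA^2_\lambda(\C_+)^\perp$ is also $\tau$-invariant, and it follows that $\tau_t P_\lambda = P_\lambda \tau_t$ as operators on $L^2(\C_+, v_\lambda)$.

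As an alternative check, the kernel formula gives the same conclusion directly. The kernel satisfies
\[
K_\lambda(z - it, \zeta) = K_\lambda(z, \zeta + it),
\]
because $(z - it) + \overline{\zeta} = z + \overline{\zeta + it}$. Combined with the invariance of $v_\lambda$ under imaginary translations, the change of variables $\zeta \mapsto \zeta - it$ shows $(P_\lambda f)(z - it) = P_\lambda(\tau_t f)(z)$.

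Finally, combining the three intertwining relations gives
\[
\tau_t \circ T^{(\lambda)}_A = \tau_t P_\lambda A P_\lambda^* = P_\lambda \tau_t A P_\lambda^* = P_\lambda A \tau_t P_\lambda^* = P_\lambda A P_\lambda^* \tau_t = T^{(\lambda)}_A \circ \tau_t.
\]
Here the second equality uses $\tau_t P_\lambda = P_\lambda \tau_t$, the third uses $A \in \mfT\big(L^2(\C_+, v_\lambda)\big)$, and the fourth uses the relation for $P_\lambda^*$. Boundedness of $T^{(\lambda)}_A$ is clear, so $T^{(\lambda)}_A \in \mfT\big(\cA^2_\lambda(\C_+)\big)$.

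The only step with any content is the commutation of $P_\lambda$ with $\tau_t$, and the reducing-subspace argument above handles it cleanly.
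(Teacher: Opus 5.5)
Your proof is correct and follows the paper's argument. Both write $T^{(\lambda)}_A = P_\lambda \circ A \circ P_\lambda^*$ and use the $\tau$-invariance of $\cA^2_\lambda(\C_+)$ (Lemma~\ref{lem:cA-L2R-invariance}) to get that $P_\lambda$ and $P_\lambda^*$ intertwine $\tau$; your reducing-subspace argument via $\tau_t^* = \tau_{-t}$ simply makes explicit a step the paper states without detail.
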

\begin{proof}
	By Lemma~\ref{lem:cA-L2R-invariance}, the space $\cA^2_\lambda(\C_+)$ is a translation invariant subspace of $L^2(\C_+, v_\lambda)$. This implies that both $P_\lambda$, the Bergman projection, and $P_\lambda^*$, the natural inclusion, intertwine the unitary representations by translations defined on $L^2(\C_+, v_\lambda)$ and $\cA^2_\lambda(\C_+)$. If $A$ is translation invariant, then it intertwines translations as well. Hence, the conclusion follows from the identity $T^{(\lambda)}_A = P_\lambda \circ A \circ P_\lambda^*$.
\end{proof}

The description of translation invariant operators of $L^2(\R)$ is very well known (see~\cite{HormanderInvariant}), from which the description of character invariant operators of either $L^2(\R)$ or $L^2(\R_+)$ may be obtained. We present such description for the latter in the next result and provide a short proof for the sake of completeness. Similar conclusions are obtained in \cite[Section~2]{HMVVertical}.

\begin{proposition}\label{prop:CharInvariance}
	A bounded operator $T$ of $L^2(\R_+)$ is character invariant, i.e.~it belongs to $\mfT\big(L^2(\R_+)\big)$, if and only if there exists $h \in L^\infty(\R_+)$ such that $T = M_h$.
\end{proposition}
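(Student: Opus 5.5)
The ``if'' direction is immediate: for $h \in L^\infty(\R_+)$ the multiplier $M_h$ is bounded on $L^2(\R_+)$, and it commutes with $\rho_t = M_{e_t}$, where $e_t(\xi) = e^{it\xi}$, because multiplication operators commute. (The statement writes $\mfT\big(L^2(\R_+)\big)$, but the intended class is $\mfC\big(L^2(\R_+)\big)$, the operators commuting with $\rho$.) The plan for the converse is to show that any bounded $T$ commuting with all $\rho_t$ commutes with the whole multiplication algebra $\{M_g : g \in L^\infty(\R_+)\}$. We then invoke maximal abelianness of that algebra, or prove it directly.

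Let $T \in \mfC\big(L^2(\R_+)\big)$. First, $T$ commutes with the norm-closed (indeed weakly closed) linear span of $\{M_{e_t} : t \in \R\}$. For $f \in L^1(\R)$, the weak integral $\int_\R f(t)\rho_t \dif t$ equals $M_{\widehat{f}}$ with $\widehat{f}(\xi)=\int f(t)e^{it\xi}\dif t$. So $T$ commutes with $M_g$ for every $g$ in the Fourier image of $L^1$, which contains a dense subalgebra of $C_0(\R_+)$ (e.g.\ restrictions of Schwartz functions). A bounded sequence $g_n \to g$ pointwise a.e.\ gives $M_{g_n} \to M_g$ strongly, by dominated convergence. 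Using uniform density in $C_0$ and then bounded pointwise approximation of indicators of bounded intervals and Borel sets (a monotone class argument), we get $TM_g = M_gT$ for all $g \in L^\infty(\R_+)$.

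Second, we identify $T$. Since $L^2(\R_+)$ carries a non-finite measure, I would work with the weight $w(\xi)=e^{-\xi}\in L^2(\R_+)\cap L^\infty(\R_+)$, which is strictly positive, and set $h = T(w)/w$, a measurable function. For $g\in L^\infty(\R_+)$ we have $T(gw) = T M_g w = M_g T w = g h w$. Hence $T$ agrees with multiplication by $h$ on the dense subspace $L^\infty\cdot w$; density holds since $w>0$ and $\{gw : g\in L^\infty\}$ contains $\chi_E$ times bounded functions for every set $E$ of finite measure. The main obstacle is showing $h \in L^\infty$ with $\|h\|_\infty \le \|T\|$. For this, take $E=\{|h|>\|T\|+\varepsilon\}\cap[0,N]$ and $g=\chi_E/w$, which is bounded on $[0,N]$. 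Then $\|T(\chi_E)\|=\|h\chi_E\| \ge (\|T\|+\varepsilon)\|\chi_E\|$, which forces $|E|=0$. Therefore $h\in L^\infty(\R_+)$, and $T = M_h$ on a dense set, hence everywhere by boundedness.

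Alternatively, one could conjugate by the Fourier transform and cite H\"ormander's description of translation invariant operators on $L^2(\R)$ as Fourier multipliers, extending $T$ by $0$ on $L^2(\R_-)$. The direct argument above is self-contained, so I would present that one.
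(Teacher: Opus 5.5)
Your proof is correct, and it takes a genuinely different route from the paper's.

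The paper's argument runs as follows. It reduces to self-adjoint $T$, since real and imaginary parts are again character invariant, and takes the spectral resolution of $T$. It notes that the spectral projections commute with every $\rho_t$, so their ranges are $\rho$-invariant subspaces. By the classification borrowed from Rudin (the one used in Theorem~\ref{thm:cA-invariant-subspaces}), each range is some $L^2(E)$, so each spectral projection is some $M_{\chi_E}$. Then $T$ is a norm limit of multipliers $M_{h_n}$ with $h_n$ simple, and $\|M_{h_n}-M_{h_m}\| = \|h_n-h_m\|_\infty$ makes $(h_n)$ Cauchy in $L^\infty(\R_+)$.

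You proceed differently. You show directly that anything commuting with all $\rho_t$ commutes with the whole multiplication algebra, via $L^1$-smearing, uniform density and a monotone class argument. You then prove by hand, through the cyclic vector $w = e^{-\xi}$, that this algebra is its own commutant.

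The paper's version is shorter given its citations. It rests on the spectral theorem, on the fact that spectral projections lie in the commutant, and on the invariant subspace classification, whose proof in Rudin is itself a Fourier-analytic argument close to your first step. Your version has several advantages:
\begin{itemize}
\item It needs no self-adjoint reduction and handles an arbitrary bounded $T$ at once.
\item It gives the bound $\|h\|_\infty \le \|T\|$ explicitly.
\item It produces the invariant subspace classification as a corollary rather than taking it as input. The orthogonal projection onto a $\rho$-invariant subspace commutes with every $\rho_t$, hence equals $M_h$ with $h = h^2 = \overline{h}$, i.e.\ $h = \chi_E$.
\end{itemize}

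One small point of wording needs fixing. The set $\{gw : g \in L^\infty(\R_+)\}$ need not contain $\chi_E$ for an unbounded set $E$ of finite measure, since then $\chi_E/w$ can be unbounded. It does contain $\chi_E$ for every bounded measurable $E$, which already gives density. Alternatively, if $\psi \perp gw$ for all bounded $g$, choose $g = \psi/|\psi|$ on $\{\psi \neq 0\}$; this gives $\int w|\psi| = 0$, hence $\psi = 0$.
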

\begin{proof}
	First, note that for every $h \in L^\infty(\R_+)$ the operator $M_h$ is character invariant. The reason is that $\rho_t = M_{e^{it\xi}}$.
	
	Let us now consider a character invariant bounded operator $T$ of $L^2(\R_+)$. Then, its adjoint and so its real and imaginary parts are character invariant as well. Hence, we may assume that $T$ is self-adjoint. We consider the spectral resolution of $T$, which is given by a family $\cP$, consisting of orthogonal projections onto Hilbert subspaces of $L^2(\R_+)$, that satisfies the following properties.
	\begin{enumerate}
		\item The operator $T$ is the norm limit of a sequence of finite linear combinations of elements in $\cP$ (see~\cite[Theorem~5.2.2]{KRvolI}).
		\item The character invariance of the operator $T$ implies that every element of $\cP$ is character invariant (see~\cite[Theorem~1.44]{FollandHarmonic}).
	\end{enumerate}
	From (2) it follows that for every $P \in \cP$, the Hilbert subspace $P(L^2(\R_+))$ is character invariant, and so equals $L^2(E)$, for some Borel subset $E$ of $\R_+$, (see the proof of Theorem~\ref{thm:cA-invariant-subspaces}), whose orthogonal projection is $M_{\chi_E}$. We conclude that every element of $\cP$ is of the form $M_{\chi_E}$, for some measurable $E \subset \R_+$. Once this has been established, (1) implies that there exists a sequence $(h_n)_n \subset L^\infty(\R_+)$ of simple functions such that $\|T - M_{h_n}\| \to 0$ as $n \to +\infty$. Hence, $(h_n)_n$ converges to some $h \in L^\infty(\R_+)$ and we necessarily have $T = M_h$.
\end{proof}

Through the Bargmann transform $\cB_\lambda : L^2(\R_+) \rightarrow \cA^2_\lambda(\C_+)$, we obtain a description of the translation invariant operators of $\cA^2_\lambda(\C_+)$. It is worthwhile to compare the next result with \cite[Theorem~2.3]{HMVVertical}.

\begin{proposition}\label{prop:TranslationInvariance}
	A bounded operator $T$ of $\cA^2_\lambda(\C_+)$ is translation invariant, i.e.~it belongs to $\mfT\big(\cA^2_\lambda(\C_+)\big)$, if and only if there exists $h \in L^\infty(\R_+)$ such that $T = \cB_\lambda \circ M_h \circ \cB_\lambda^{-1}$.
\end{proposition}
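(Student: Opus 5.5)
The plan is to transfer the problem to $L^2(\R_+)$ by conjugating with the Bargmann transform, and then to invoke Proposition~\ref{prop:CharInvariance}. Two earlier results do the work. By Theorem~\ref{thm:Bargmann-diagram}, $\cB_\lambda : L^2(\R_+) \rightarrow \cA^2_\lambda(\C_+)$ is unitary. By Corollary~\ref{cor:Bargmann-intertwining}, it satisfies $\tau_t \circ \cB_\lambda = \cB_\lambda \circ \rho_t$ for every $t \in \R$.

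Given a bounded operator $T$ of $\cA^2_\lambda(\C_+)$, define $S = \cB_\lambda^{-1} \circ T \circ \cB_\lambda$. This is a bounded operator of $L^2(\R_+)$. Since $\cB_\lambda$ is unitary, the intertwining identity can be rewritten as $\rho_t = \cB_\lambda^{-1} \circ \tau_t \circ \cB_\lambda$ for every $t \in \R$. Consequently
\[
	\rho_t \circ S = \cB_\lambda^{-1} \circ \tau_t \circ T \circ \cB_\lambda,
	\qquad
	S \circ \rho_t = \cB_\lambda^{-1} \circ T \circ \tau_t \circ \cB_\lambda.
\]
Hence $S$ commutes with every $\rho_t$ if and only if $T$ commutes with every $\tau_t$. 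In other words, $T \in \mfT\big(\cA^2_\lambda(\C_+)\big)$ if and only if $S$ is character invariant on $L^2(\R_+)$.

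Now apply Proposition~\ref{prop:CharInvariance}, whose statement is about character invariant operators even though it writes $\mfT$. It says $S$ is character invariant if and only if $S = M_h$ for some $h \in L^\infty(\R_+)$. This is equivalent to $T = \cB_\lambda \circ M_h \circ \cB_\lambda^{-1}$.

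For the converse direction, start from $T = \cB_\lambda \circ M_h \circ \cB_\lambda^{-1}$. It is worth checking directly that $M_h$ commutes with $\rho_t = M_{e^{it\xi}}$, since multiplication operators commute. Transporting this through the unitary $\cB_\lambda$ shows that $T$ is translation invariant.

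There is no real obstacle here. The only point needing care is that the intertwining identity of Corollary~\ref{cor:Bargmann-intertwining} was verified on a dense subspace using the integral formula. Since both sides are bounded operators, it extends to all of $L^2(\R_+)$ by continuity, and it is this extended identity that the argument above uses.
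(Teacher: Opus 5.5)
Your proposal is correct and is essentially the paper's own argument. The paper likewise conjugates by the unitary $\cB_\lambda$, uses the intertwining identity $\tau_t \circ \cB_\lambda = \cB_\lambda \circ \rho_t$ from Corollary~\ref{cor:Bargmann-intertwining}, and applies Proposition~\ref{prop:CharInvariance}; you simply write out the details, including the extension by continuity, that the paper calls immediate.
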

\begin{proof}
	This follows immediately from Corollary~\ref{cor:Bargmann-intertwining} and Proposition~\ref{prop:CharInvariance}.
\end{proof}

\begin{definition}\label{def:spectralFromBargmann}
	For every $\lambda > -1$ and $T \in \mfT\big(\cA^2_\lambda(\C_+)\big)$, the unique function $h \in L^\infty(\R_+)$ such that $T = \cB_\lambda \circ M_h \circ \cB_\lambda^{-1}$, as obtained in Proposition~\ref{prop:TranslationInvariance}, will be called the \textbf{Bargmann spectral function of $T$}.
\end{definition}

\begin{remark}\label{rmk:TranslationInvariance}
	Proposition~\ref{prop:TranslationInvariance} implies that, up to $\cB_\lambda$, every translation invariant operator is a multiplier operator $M_h$ of $L^2(\R_+)$ for some $h \in L^\infty(\R_+)$. Furthermore, such function $h$ yields all the spectral properties of the translation invariant operator in question. This highlights the relevance of Bargmann transform in the study of translation invariant operators of $\cA^2_\lambda(\C_+)$. 
	
	Our next goal is to consider some special families of translation invariant operators and, in some cases, compute the corresponding function $h \in L^\infty(\R_+)$.
\end{remark}

In the rest of this work, we will use the notation introduced in the next definition.

\begin{definition}\label{def:hextensions}
	For a function $h \in L^\infty(\R_+)$ we define its extension by $0$ to $\R$ as the function $h_{(0)} \in L^\infty(\R)$ given by
	\begin{equation}\label{eq:h_(0)}
		h_{(0)}(\xi) = 
		\begin{cases}
			h(\xi), & \text{ if } \xi \in \R_+, \\
			0, & \text{ otherwise.}
		\end{cases}
	\end{equation}
	From this, we introduce the extension, constant on the real part, as the function $\widetilde{h} \in L^\infty(\C_+)$ given by
	\begin{equation}\label{eq:widetildeh}
		\widetilde{h}(x + i\xi) = h_{(0)}(\xi) =
		\begin{cases}
			h(\xi), & \text{ if } \xi \in \R_+, \\
			0, & \text{ otherwise.}
		\end{cases}
	\end{equation}	
\end{definition}

The next result will be useful to tackle the problem formulated in Remark~\ref{rmk:TranslationInvariance}.

\begin{lemma}\label{lem:Mwidetildeh}
	With the notation from Definition~\ref{def:hextensions} and for every $h \in L^\infty(\R_+)$, the following properties hold.
	\begin{enumerate}
		\item The function $\widetilde{h}$ belongs to $L^\infty(\C_+)$ and so both operators $M_h$ and $M_{\widetilde{h}}$ of $L^2(\R_+)$ and $L^2(\C_+, v_\lambda)$, respectively, are bounded operators.
		\item The intertwining properties
		\[
			M_{\widetilde{h}} \circ V_\lambda = V_\lambda \circ M_h,
			\qquad M_{h} \circ V_\lambda^* = V_\lambda^* \circ M_{\widetilde{h}},
		\]
		hold.
	\end{enumerate}	
\end{lemma}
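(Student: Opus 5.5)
The plan is a direct computation; the lemma is elementary and only uses the explicit formulas for $V_\lambda$ and $V_\lambda^*$ from Proposition~\ref{prop:Vlambda}.

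For (1), note that $\widetilde{h}(x+i\xi) = h_{(0)}(\xi)$ depends only on $\xi$, so it is measurable on $\C_+$ as the composition of a measurable function with the projection $x+i\xi \mapsto \xi$. Moreover, $\|\widetilde{h}\|_\infty \leq \|h\|_\infty$, because the preimage of a null set of $\R$ under this projection is $\R_+ \times (\text{null set})$, which is null for $v_\lambda$. Boundedness of $M_h$ and $M_{\widetilde{h}}$, with norms at most $\|h\|_\infty$, is then standard.

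For the first identity in (2), evaluate both sides at $\varphi \in L^2(\R_+)$ and almost every $x+i\xi \in \C_+$. By Proposition~\ref{prop:Vlambda},
\[
	\big(M_{\widetilde{h}} V_\lambda(\varphi)\big)(x+i\xi) = h_{(0)}(\xi)\,
		\sqrt{\frac{\pi 2^{\lambda+3}\xi^{\lambda+1}}{\Gamma(\lambda+2)}}\, e^{-x\xi}\varphi(\xi)\chi_{\R_+}(\xi).
\]
Because of the factor $\chi_{\R_+}(\xi)$, we may replace $h_{(0)}(\xi)$ by $h(\xi)$. The result is exactly $\big(V_\lambda(h\varphi)\big)(x+i\xi) = \big(V_\lambda M_h(\varphi)\big)(x+i\xi)$.

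For the second identity, I will take adjoints of the first one rather than compute directly. Since $M_h^* = M_{\overline{h}}$, $M_{\widetilde{h}}^* = M_{\overline{\widetilde{h}}}$ and $\overline{\widetilde{h}} = \widetilde{\overline{h}}$, applying the first identity to $\overline{h}$ gives $M_{\widetilde{\overline h}} V_\lambda = V_\lambda M_{\overline h}$. Taking adjoints of this yields $V_\lambda^* M_{\widetilde h} = M_h V_\lambda^*$, as required.

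Alternatively, the second identity can be checked directly from the integral formula for $V_\lambda^*$. For almost every $\xi \in \R_+$, the factor $\widetilde{h}(x+i\xi) = h(\xi)$ is constant in $x$, so it comes out of the $x$-integral. The direct route only requires Fubini to know that, for almost every $\xi$, the section $x \mapsto \psi(x+i\xi)$ is integrable against $e^{-x\xi}\,\dif\widehat{v}_\lambda(x)$. The adjoint argument avoids this, so there is no real obstacle. The only point needing care is the almost-everywhere bookkeeping in part (1): sets of the form $\R_+ \times N$ with $N$ null in $\R$ must be $v_\lambda$-null, which holds by Tonelli.
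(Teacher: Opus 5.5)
Your proposal is correct and follows the paper's route. The paper says only that (1) is elementary and that (2) is a straightforward computation from the explicit formulas for $V_\lambda$ and $V_\lambda^*$ in Proposition~\ref{prop:Vlambda}, which is exactly your computation for the first identity. Obtaining the second identity by applying the first to $\overline{h}$ and taking adjoints is a valid minor shortcut: it avoids the Fubini bookkeeping needed to pull $h(\xi)$ out of the integral formula for $V_\lambda^*$.
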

\begin{proof}
	Property (1) es elementary, while property (2) follows from a straightforward computation using the expressions for $V_\lambda$ and $V_\lambda^*$ obtained in Proposition~\ref{prop:Vlambda}.
\end{proof}

We now prove that every translation invariant operator of $\cA^2_\lambda(\C_+)$ is a Toeplitz operator in the generalized sense considered in section~\ref{sec:BergmanToeplitz}. We recall that Definition~\ref{def:Operatorinvariance} introduced the notion of translation invariant operators for both $\cA^2_\lambda(\C_+)$ and $L^2(\C_+, v_\lambda)$, which are von Neumann algebras (see~Remark~\ref{rmk:OperatorInvariance}) denoted by $\mfT\big(\cA^2_\lambda(\C_+)\big)$ and $\mfT\big(L^2(\C_+, v_\lambda)\big)$, respectively.

\begin{theorem}\label{thm:TranslationInvariantOperatorsVonNeumann}
	For all $\lambda > -1$, every translation invariant operator of $\cA^2_\lambda(\C_+)$ is a Toeplitz operator whose symbol is a translation invariant operator of $L^2(\C_+, v_\lambda)$. Furthermore, let us define the set of operators of $L^2(\C_+, v_\lambda)$ given by
	\[
		\mfA_\lambda = \{ A_{\widetilde{h}} : h \in L^\infty(\R_+) \},
	\]
	where we denote $A_{\widetilde{h}} = \cF_\lambda \circ M_{\widetilde{h}} \circ \cF_\lambda^{-1}$ and $\widetilde{h}$ is defined by \eqref{eq:widetildeh}. Then, $\mfA_\lambda$ is a commutative von Neumann subalgebra of $\mfT\big(L^2(\C_+, v_\lambda)\big)$ that satisfies the following properties.
	\begin{enumerate}
		\item The assignment 
		\[
			h \mapsto A_{\widetilde{h}},
		\]
		yields an $*$-isomorphism of $*$-algebras from $L^\infty(\R_+)$ onto $\mfA_\lambda$.
		\item The assignments 
		\[
			h \mapsto T^{(\lambda)}_{A_{\widetilde{h}}}, \quad 
			A \mapsto T^{(\lambda)}_A
		\]
		define $*$-algebra isomorphisms from $L^\infty(\R_+)$ and $\mfA_\lambda$, respectively, onto $\mfT\big(\cA^2_\lambda(\C_+)\big)$.
		\item For every $T \in \mfT\big(\cA^2_\lambda(\C_+)\big)$, if we consider $h \in L^\infty(\R_+)$ the unique function such that $T = \cB_\lambda \circ M_h \circ \cB_\lambda^{-1}$, then $T = T^{(\lambda)}_{A_{\widetilde{h}}}$, for the bounded operator $A_{\widetilde{h}} = \cF_\lambda \circ M_{\widetilde{h}} \circ \cF_\lambda^{-1} \in \mfA_\lambda$.
	\end{enumerate}
\end{theorem}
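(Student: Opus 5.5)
The argument rests on one identity, which I will establish first:
\[
	T^{(\lambda)}_{A_{\widetilde{h}}} = \cB_\lambda \circ M_h \circ \cB_\lambda^{-1}
	\qquad (h \in L^\infty(\R_+)).
\]
By Theorem~\ref{thm:Bargmann-diagram}, the Bergman projection factors as $P_\lambda = \cF_\lambda V_\lambda V_\lambda^* \cF_\lambda^{-1}$. Indeed, $\cF_\lambda V_\lambda V_\lambda^* \cF_\lambda^{-1}$ is the orthogonal projection onto $\cF_\lambda V_\lambda(L^2(\R_+)) = \cA^2_\lambda(\C_+)$, because $\cF_\lambda$ is unitary and $V_\lambda V_\lambda^*$ is the projection onto $V_\lambda(L^2(\R_+))$. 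Restricting to $\cA^2_\lambda(\C_+)$, where $P_\lambda^*$ is the inclusion, we may write $P_\lambda^* = \cF_\lambda V_\lambda \cB_\lambda^{-1}$ and $P_\lambda = \cB_\lambda V_\lambda^* \cF_\lambda^{-1}$. Then
\[
	T^{(\lambda)}_{A_{\widetilde{h}}} = \cB_\lambda V_\lambda^* \cF_\lambda^{-1}\,\cF_\lambda M_{\widetilde{h}} \cF_\lambda^{-1}\,\cF_\lambda V_\lambda \cB_\lambda^{-1}
	= \cB_\lambda V_\lambda^* M_{\widetilde{h}} V_\lambda \cB_\lambda^{-1}
	= \cB_\lambda M_h V_\lambda^* V_\lambda \cB_\lambda^{-1} = \cB_\lambda M_h \cB_\lambda^{-1},
\]
using Lemma~\ref{lem:Mwidetildeh}(2) and $V_\lambda^*V_\lambda = I$. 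Combined with Proposition~\ref{prop:TranslationInvariance}, this gives (3) and the first claim immediately.

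Next I will show that $\mfA_\lambda \subset \mfT(L^2(\C_+,v_\lambda))$. By Lemma~\ref{lem:Flambda-intertwining}, $\tau_t = \cF_\lambda \rho_t \cF_\lambda^{-1}$, and $\rho_t = M_{e^{it\xi}}$ commutes with the multiplier $M_{\widetilde{h}}$. Hence $\tau_t$ commutes with $A_{\widetilde{h}}$.

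For (1), the map $h \mapsto \widetilde{h}$ is an injective $*$-homomorphism $L^\infty(\R_+) \to L^\infty(\C_+)$. It is isometric, since $\widetilde{h}$ is constant in $x$ on a set of positive measure. The map $a \mapsto M_a$ is an isometric $*$-homomorphism, and conjugation by the unitary $\cF_\lambda$ is a $*$-automorphism. So $h \mapsto A_{\widetilde{h}}$ is an isometric $*$-isomorphism onto $\mfA_\lambda$, and $\mfA_\lambda$ is commutative.

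To see that $\mfA_\lambda$ is a von Neumann algebra, I will show that $\{M_{\widetilde{h}}\}$ is weak-operator closed. Its elements are exactly the multipliers by $L^\infty$ functions depending only on $\xi$ and vanishing for $\xi<0$, which form the intersection of $L^\infty(\C_+)$ with the commutant of the unitaries implementing translations in $x$. That characterization is the step I consider the main technical point. A cleaner alternative is to write $\{M_{\widetilde h}\} = I \otimes (M_{\chi_{\R_+}} L^\infty(\R))$ in the decomposition \eqref{eq:L2-tensordecomposition}. The second factor is a von Neumann algebra (a corner of the maximal abelian algebra $L^\infty(\R)$), and its tensor product with $\C I$ is again one. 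Conjugation by $\cF_\lambda$ preserves WOT-closedness. The unit here is $A_{\widetilde{1}}$, a projection, so $\mfA_\lambda$ is a von Neumann algebra in the nonunital sense, as is standard for corners.

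For (2), the identity shows that $h \mapsto T^{(\lambda)}_{A_{\widetilde h}} = \cB_\lambda M_h \cB_\lambda^{-1}$ is the composition of $h\mapsto M_h$, an injective $*$-homomorphism, with conjugation by the unitary $\cB_\lambda$. It is therefore a $*$-isomorphism onto $\mfT(\cA^2_\lambda(\C_+))$, with surjectivity given by Proposition~\ref{prop:TranslationInvariance}. The map $A\mapsto T^{(\lambda)}_A$ on $\mfA_\lambda$ is this map composed with the inverse of the isomorphism in (1), so it is also a $*$-isomorphism. This is nontrivial, since in general $T_{AB}\neq T_AT_B$, and here multiplicativity is inherited from the identity rather than verified directly.
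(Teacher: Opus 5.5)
Your proposal is correct and follows essentially the paper's route. The factorization $P_\lambda = \cB_\lambda \circ V_\lambda^* \circ \cF_\lambda^{-1}$ together with Lemma~\ref{lem:Mwidetildeh} yields $T^{(\lambda)}_{A_{\widetilde{h}}} = \cB_\lambda \circ M_h \circ \cB_\lambda^{-1}$, and everything else follows from Proposition~\ref{prop:TranslationInvariance}. The differences are minor. You prove this identity for every $h$ up front, so injectivity and multiplicativity in (2) are immediate, whereas the paper argues injectivity separately. You also note, more carefully than the paper, that the unit of $\mfA_\lambda$ is the projection $A_{\widetilde{1}}$ rather than $I$.

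Discard your first suggested route to weak closedness, for two reasons: real-direction translations do not act unitarily on $L^2(\C_+, v_\lambda)$, and a commutant intersected with $L^\infty(\C_+)$ always contains $I = M_1$, which is not of the form $M_{\widetilde{h}}$. Your tensor-product/corner argument is the one to keep.
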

\begin{proof}
	The space of operators $\{M_h : h \in L^\infty(\R_+)\}$ of $L^2(\R_+)$ is well known to be a von Neumann algebra isomorphic to $L^\infty(\R_+)$. This implies that the space of operators $\{M_{\widetilde{h}} : h \in L^\infty(\R_+)\}$ of $L^2(\C_+, v_\lambda)$ is a von Neumann algebra as well and it is isomorphic to $L^\infty(\R_+)$ (see~\cite[Example~11.2.1]{KRvolII}). Since, $\cF_\lambda$ is unitary, $\mfA_\lambda$ is a commutative von Neumann algebra of operators of $L^2(\C_+, v_\lambda)$. 

	Also, every operator $M_{\widetilde{h}}$, with $h \in L^\infty(\R_+)$, is clearly character invariant. Hence, Lemma~\ref{lem:Flambda-intertwining} implies that the elements of $\mfA_\lambda$ are translation invariant. The current discussion thus implies that $\mfA_\lambda$ is a commutative von Neumann subalgebra of $\mfT\big(L^2(\C_+, v_\lambda)\big)$. This establishes the first part of the statement.
	
	On the other hand, the assignments
	\begin{equation}\label{eq:h-to-Ah}
		h \mapsto \widetilde{h} \mapsto A_{\widetilde{h}},
	\end{equation}
	from $L^\infty(\R_+)$ to $L^\infty(\C_+)$ and then to $\mfA_\lambda$ are clearly injective homomorphisms of $*$-algebras. This uses the definition of $\widetilde{h}$ and the fact that $\cF_\lambda$ is unitary. Since the image of the composition of these maps is $\mfA_\lambda$, this proves (1).

	Let us consider $T \in \mfT\big(\cA^2_\lambda(\C_+)\big)$, a translation invariant bounded operator of $\cA^2_\lambda(\C_+)$. First, we construct the following diagrams and prove their commutativity.
	\begin{equation}\label{eq:Diagrams}
		\begin{tikzcd}
			L^2(\C_+. v_\lambda) \arrow[r, "M_{\widetilde{h}}"] &
			L^2(\C_+, v_\lambda) 
					\arrow[d, "V_\lambda^*"']
				& L^2(\C_+, v_\lambda) \arrow[r, "A_{\widetilde{h}}"]
					&
			L^2(\C_+, v_\lambda) \arrow[d, "P_\lambda"']  \\
			L^2(\R_+) \arrow[r, "M_h"] \arrow[u, hook', "V_\lambda"']  
				&L^2(\R_+) 
				& \cA^2_\lambda(\C_+) \arrow[u, hook'] 
					\arrow[r, "T"] 
				& \cA^2_\lambda(\C_+)
		\end{tikzcd}
	\end{equation}
	By Proposition~\ref{prop:TranslationInvariance}, there exists a unique $h \in L^\infty(\R_+)$ such that $T = \cB_\lambda \circ M_h \circ \cB_\lambda^{-1}$. From this, Lemma~\ref{lem:Mwidetildeh} implies the existence of $\widetilde{h} \in L^\infty(\C_+)$ satisfying \eqref{eq:widetildeh}. Then, we consider the operator of $L^2(\C_+, v_\lambda)$ defined by
	\[
		A_{\widetilde{h}} = \cF_\lambda \circ M_{\widetilde{h}} \circ \cF_\lambda^{-1},
	\]
	which belongs to $\mfA_\lambda$ by definition. Together with the maps $V_\lambda, V_\lambda^*$, from Proposition~\ref{prop:Vlambda}, and the Bergman projection $P_\lambda$, this yields the arrows in both diagrams \eqref{eq:Diagrams}. We now establish their commutativity.
	
	We compute
	\begin{equation}\label{eq:Mh-Vlambda}
		V_\lambda^* \circ M_{\widetilde{h}} \circ V_\lambda 
			= V_\lambda^* \circ V_\lambda \circ M_h = M_h,
	\end{equation}
	where the first identity follows from Lemma~\ref{lem:Mwidetildeh}(2) and the second identity follows from the fact that $V_\lambda$ is an isometric inclusion and $V_\lambda^*$ is thus the corresponding projection, so that $V_\lambda^* \circ V_\lambda$ is the identity in $L^2(\R_+)$. Hence, the left diagram of \eqref{eq:Diagrams} is commutative.
	
	By Theorem~\ref{thm:Bargmann-diagram}, the unitary maps $\cF_\lambda$ and $\cB_\lambda$ yield a commutative diagram with the vertical arrows of the diagrams in \eqref{eq:Diagrams}. Hence, in \eqref{eq:Diagrams}, the commutativity of the left diagram implies the commutativity of the right diagram. We prove this explicitly for the sake of completeness. 
	
	Theorem~\ref{thm:Bargmann-diagram} yields the identity
	\begin{equation}\label{eq:Plambda-Vlambda}
		P_\lambda = \cB_\lambda \circ V_\lambda^* \circ \cF_\lambda^{-1},
	\end{equation}
	which allows us to compute as follows
	\begin{align*}
		P_\lambda \circ A_{\widetilde{h}} \circ P_\lambda^* 
			&= \big(\cB_\lambda \circ V_\lambda^* \circ \cF_\lambda^{-1}\big)
				\circ \big(\cF_\lambda \circ M_{\widetilde{h}} \circ \cF_\lambda^{-1}\big) 
				\circ 
				\big(\cB_\lambda \circ V_\lambda^* \circ \cF_\lambda^{-1}\big)^* \\
			&= \cB_\lambda \circ V_\lambda^* \circ M_{\widetilde{h}} 
					\circ V_\lambda \circ \cB_\lambda^{-1}  \\
			&= \cB_\lambda \circ M_h \circ \cB_\lambda^{-1} = T,
	\end{align*}
	where the first identity uses \eqref{eq:Plambda-Vlambda}, the third identity uses \eqref{eq:Mh-Vlambda} and the last identity follows from the definition of $h$. Since $P_\lambda^*$ is the natural inclusion from $\cA^2_\lambda(\C_+)$ into $L^2(\C_+, v_\lambda)$ (because $P_\lambda$ is the Bergman projection), this proves that the right diagram in \eqref{eq:Diagrams} is indeed commutative. By the definition of Toeplitz operators, we thus have
	\[
		T = P_\lambda \circ A_{\widetilde{h}} \circ P_\lambda^* 
				= T^{(\lambda)}_{A_{\widetilde{h}}}.
	\]
	Since $T \in \mfT\big(\cA^2_\lambda(\C_+)\big)$ was arbitrarily given, this proves (3). It also establishes the surjectivity of the assignments in (2). Note that it is straightforward to prove the linearity of such assignments.
	
	We will now prove the injectivity of the assignments in (2). Let us assume that $T^{(\lambda)}_A = 0$, with $A \in \mfA_\lambda$. Then, there exists $h \in L^\infty(\R_+)$ such that $A = A_{\widetilde{h}}$, for which we have
	\[
		P_\lambda \circ A_{\widetilde{h}} \circ P_\lambda^* =  T^{(\lambda)}_{A_{\widetilde{h}}} = T^{(\lambda)}_A = 0,
	\]
	where we used the definition of Toeplitz operator in the first identity. On the other hand, this last computation, the definition of $A_{\widetilde{h}}$ and \eqref{eq:Plambda-Vlambda}, together with the commutative diagram arguments used before, lead us to
	\[
		M_h = V_\lambda^* \circ M_{\widetilde{h}} \circ V_\lambda = 0,
	\]
	where the first identity follows from \eqref{eq:Mh-Vlambda}. This implies that $h = 0$ and so that $\widetilde{h} = 0$. We conclude that $A = A_{\widetilde{h}} = 0$. This proves the injectivity of the assignments in (2) and thus that they define linear isomorphisms.
	
	Finally, we prove that the assignments in (2) are $*$-isomorphisms. Note that we have constructed the diagrams \eqref{eq:Diagrams} so that their commutativity holds with $T = T^{(\lambda)}_{A_{\widetilde{h}}}$. Hence, the first assignment in (2) is a $*$-isomorphism because it is the inverse of the map $T \mapsto h$, where $h$ is given by the condition $T = \cB_\lambda \circ M_h \circ \cB_\lambda^{-1}$, which is clearly a $*$-isomorphism. From the definition of $\mfA_\lambda$ and since the maps in \eqref{eq:h-to-Ah} are $*$-isomorphisms, the previous case shows that the second assignment in (2) is a $*$-isomorphism as well. This completes the proof of (2) and of the Theorem.
\end{proof}

\begin{remark}\label{rmk:TranslationInvariantOperatorsVonNeumann}
	As it was first proved in \cite{HHMVerticalWeighted} (see~also~\cite{HMVVertical}), for every $\lambda > -1$, the $C^*$-algebra generated by Toeplitz operators $T^{(\lambda)}_a$ with translation invariant symbols $a \in L^\infty(\C_+)$ consists precisely of those translation invariant operators $T \in \mfT\big(\cA^2_\lambda(\C_+)\big)$ for which the function $h \in L^\infty(\R_+)$ given by
	\begin{equation}\label{eq:TIOVN}
		M_h = \cB_\lambda^{-1} \circ T \circ \cB_\lambda
	\end{equation}
	is such that $h \circ \exp : \R \rightarrow \C$ is uniformly continuous; for an explanation of this fact, we refer to the last paragraph of Remark~\ref{rmk:spectral-FourierIntertwining}. On the other hand we have proved that, under the correspondence given by \eqref{eq:TIOVN}, the translation invariant bounded operators correspond to the full space $L^\infty(\R_+)$ (see~Proposition~\ref{prop:TranslationInvariance}). In particular, the von Neumann algebra $\mfT\big(\cA^2_\lambda(\C_+)\big)$ is much larger than the $C^*$-algebra generated by Toeplitz operators $T^{(\lambda)}_a$, with $a \in L^\infty(\C_+)$ translation invariant.
	
	In contrast with the conclusion of the previous paragraph, we have proved in Theorem~\ref{thm:TranslationInvariantOperatorsVonNeumann} that every bounded operator $T \in \mfT\big(\cA^2_\lambda(\C_+)\big)$ can be written $T = T^{(\lambda)}_A$ for some $A \in \mfT\big(L^2(\C_+, v_\lambda)\big)$, and so that every such $T$ is itself a Toeplitz operator whose symbol is an operator. This conclusion alone is actually not very difficult to prove when no further restriction on $A$, besides translation invariance, is required. With this respect, the non-triviality of Theorem~\ref{thm:TranslationInvariantOperatorsVonNeumann} lies in the fact that we have achieved this by singling out the von Neumann algebra $\mfA_\lambda$ from which a unique choice of $A \in \mfA_\lambda$ such that $T = T^{(\lambda)}_A$ is possible, for every $T \in \mfT\big(\cA^2_\lambda(\C_+)\big)$. Furthermore, our particular choice of the von Neumann algebra $\mfA_\lambda$ is non-trivial because it is naturally constructed from the Bargmann spectral functions $h \in L^\infty(\R_+)$ of operators $T \in \mfT\big(\cA^2_\lambda(\C_+)\big)$ as established in Theorem~\ref{thm:TranslationInvariantOperatorsVonNeumann}(3).
	
	Another non-trivial feature of our construction is that the assignment $A \mapsto T^{(\lambda)}_A$ is a $*$-isomorphism, as a map from $\mfA_\lambda$ onto $\mfT\big(\cA^2_\lambda(\C_+)\big)$. To understand this, recall that the assignment $a \mapsto T^{(\lambda)}_a$, for function symbols $a \in L^\infty(\C_+)$, does not preserve product, even after restriction to translation invariant functions. However, it does preserve product when the assignment, as in Theorem~\ref{thm:TranslationInvariantOperatorsVonNeumann}(2), is considered for operator symbols $A \in \mfA_\lambda$.
	
	The remark in the previous paragraph is also related to the non-triviality of the uniqueness of choice of $A \in \mfA_\lambda$ so that $T = T^{(\lambda)}_A$, for a given $T \in \mfT\big(\cA^2_\lambda(\C_+)\big)$. To see the non-triviality of the uniqueness of $A$, we recall the well-known fact that, for symbols $a \in L^\infty(\C_+)$, the identity $T^{(\lambda)}_a = 0$, for some $\lambda > -1$, implies $a = 0$. However, this does not hold for Toeplitz operators of the general type considered here. More precisely, there exist non-zero bounded operators $A$ of $L^2(\C_+, v_\lambda)$ for which $T^{(\lambda)}_A = 0$. Of course, by Theorem~\ref{thm:TranslationInvariantOperatorsVonNeumann}, this cannot happen when $A \in \mfA_\lambda$.
\end{remark}

Recall that a bounded operator of $L^2(\R)$ is called a convolution type operator when it is of the form $\cF \circ M_g \circ \cF^{-1}$ for some $g \in L^\infty(\R)$.

\begin{theorem}\label{thm:AlambdaConvolutionType}
	With the notation of Theorem~\ref{thm:TranslationInvariantOperatorsVonNeumann} and \eqref{eq:h_(0)}, every operator $A_{\widetilde{h}}$ belonging to $\mfA_\lambda$, where $h \in L^\infty(\R_+)$, satisfies
	\[
		A_{\widetilde{h}} = I \otimes 
				\big( \cF \circ M_{h_{(0)}} \circ \cF^{-1}\big),
	\]
	with respect to the tensor product decomposition $L^2(\C_+, v_\lambda) = L^2(\R_+, \widehat{v}_\lambda) \otimes L^2(\R)$ and where $\cF \circ M_{h_{(0)}} \circ \cF^{-1}$ is a convolution type operator. Furthermore, the von Neumann algebra $\mfT\big(\cA^2_\lambda(\C_+)\big)$ of translation invariant bounded operators of $\cA^2_\lambda(\C_+)$ consists of Toeplitz operators of the form
	\begin{equation}\label{eq:IotimesC}
		T = T^{(\lambda)}_{I \otimes C},
	\end{equation}
	where $C$ is a convolution type operator of $L^2(\R)$.
\end{theorem}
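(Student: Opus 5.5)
The plan is to verify the tensor product identity on elementary tensors and then combine it with Theorem~\ref{thm:TranslationInvariantOperatorsVonNeumann}.

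First, with respect to the decomposition \eqref{eq:L2-tensordecomposition}, the multiplier $M_{\widetilde{h}}$ factors as
\[
	M_{\widetilde{h}} = I \otimes M_{h_{(0)}}.
\]
This holds because $\widetilde{h}(x + i\xi) = h_{(0)}(\xi)$ depends only on the second variable. I will check it on elementary tensors $f \otimes g$, with $f \in L^2(\R_+, \widehat{v}_\lambda)$ and $g \in L^2(\R)$. The identity $\widetilde{h}\,(f\otimes g) = f \otimes (h_{(0)} g)$ holds pointwise almost everywhere. Both sides are bounded operators and the span of elementary tensors is dense, so the identity extends to all of $L^2(\C_+, v_\lambda)$.

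Next, since $\cF_\lambda = I \otimes \cF$ by definition, its inverse is $\cF_\lambda^{-1} = I \otimes \cF^{-1}$. The multiplicativity of tensor products of bounded operators, $(A_1\otimes B_1)(A_2 \otimes B_2) = A_1A_2 \otimes B_1B_2$, then gives
\[
	A_{\widetilde{h}} = (I\otimes \cF)(I \otimes M_{h_{(0)}})(I \otimes \cF^{-1}) = I \otimes \big(\cF \circ M_{h_{(0)}} \circ \cF^{-1}\big).
\]
Because $h_{(0)} \in L^\infty(\R)$, the right-hand factor is by definition a convolution type operator of $L^2(\R)$. This proves the first claim.

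For the second claim, Theorem~\ref{thm:TranslationInvariantOperatorsVonNeumann}(3) writes every $T \in \mfT\big(\cA^2_\lambda(\C_+)\big)$ as $T = T^{(\lambda)}_{A_{\widetilde{h}}}$ for some $h \in L^\infty(\R_+)$. By the first part, $T = T^{(\lambda)}_{I \otimes C}$ with $C = \cF \circ M_{h_{(0)}}\circ \cF^{-1}$, which gives \eqref{eq:IotimesC}.

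I read the statement as the assertion that every element of $\mfT\big(\cA^2_\lambda(\C_+)\big)$ has this form. I will nonetheless remark that the converse also holds. For an arbitrary $g \in L^\infty(\R)$, $I \otimes (\cF M_g \cF^{-1}) = \cF_\lambda \circ M_{\widetilde g}\circ\cF_\lambda^{-1}$, where $\widetilde g(x+i\xi) = g(\xi)$, commutes with $\tau_t = I \otimes (\text{translation})$. Hence it is translation invariant, and Proposition~\ref{prop:ToepOpInvariance} shows that its Toeplitz operator lies in $\mfT\big(\cA^2_\lambda(\C_+)\big)$. The set of operators of the form \eqref{eq:IotimesC} is therefore exactly $\mfT\big(\cA^2_\lambda(\C_+)\big)$.

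The only delicate step is making the identity $M_{\widetilde h} = I\otimes M_{h_{(0)}}$ rigorous. This is a standard density and boundedness argument, and I do not expect any real obstacle.
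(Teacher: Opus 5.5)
Your proof is correct and follows essentially the same route as the paper. You verify $M_{\widetilde{h}} = I \otimes M_{h_{(0)}}$ on elementary tensors and extend by density, conjugate by $\cF_\lambda = I \otimes \cF$, invoke Theorem~\ref{thm:TranslationInvariantOperatorsVonNeumann}(3), and get the converse from $\tau_t = I \otimes \hat{\tau}_t$ together with Proposition~\ref{prop:ToepOpInvariance}. The paper instead writes out the conjugation $\tau_t^{-1} \circ T^{(\lambda)}_{I \otimes C} \circ \tau_t$ inline, which amounts to the same thing.
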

\begin{proof}
	For $f \in L^2(\R_+, \widehat{v}_\lambda)$ and $g \in L^2(\R)$, we have
	\begin{align*}
		\big(M_{\widetilde{h}}(f \otimes g)\big)(x + i\xi)
		&= \widetilde{h}(x + i\xi) f(x) g(\xi) \\
		&= f(x) h_{(0)}(\xi) g(\xi) \\
		&= \big((I \otimes M_{h_{(0)}})(f \otimes g)\big)(x + i\xi),
	\end{align*}
	for almost every $x + i\xi \in \C_+$. By the density of the elements $f \otimes g$ considered, this yields
	\[
		M_{\widetilde{h}} = I \otimes M_{h_{(0)}},
	\]
	which implies, by the definitions of $A_{\widetilde{h}}$ and $\cF_\lambda$, the first identity in the statement.
	
	To prove the second claim, we note that the previous paragraph and Theorem~\ref{thm:TranslationInvariantOperatorsVonNeumann}(3) imply that every element of $\mfT\big(\cA^2_\lambda(\C_+)\big)$ is of the form
	\[
		T^{(\lambda)}_{A_{\widetilde{h}}} = T^{(\lambda)}_{I \otimes C},
	\]
	where $C = \cF \circ M_{h_{(0)}} \circ \cF^{-1}$ is a convolution type operator of $L^2(\R)$. Conversely, let $C = \cF \circ M_g \circ \cF^{-1}$ be a convolution type operator of $L^2(\R)$, where $g \in L^\infty(\R)$. We need to prove that $T^{(\lambda)}_{I \otimes C}$ belongs to $\mfT\big(\cA^2_\lambda(\C_+)\big)$, i.e.~it is translation invariant. To achieve this, we note that, for every $t \in \R$, the translation operator on $L^2(\C_+, v_\lambda)$ satisfies
	\begin{equation}\label{eq:tautensor}
		\tau_t = I \otimes \hat{\tau}_t
	\end{equation}
	with respect to the tensor product decomposition $L^2(\C_+, v_\lambda) = L^2(\R_+, \widehat{v}_\lambda) \otimes L^2(\R)$ and where $\hat{\tau}_t$ is the translation operator
	\[
		\big(\hat{\tau}_t(g)\big)(y) = g(y - t),
	\]
	defined for $g \in L^2(\R)$ and $y \in \R$. On the other hand, Lemma~\ref{lem:cA-L2R-invariance} implies that $P_\lambda$ and $P_\lambda^*$ intertwine $\tau_t$. Hence, using \eqref{eq:tautensor} we compute
	\begin{align*}
		\tau_t^{-1} \circ T^{(\lambda)}_{I \otimes C} \circ \tau_t
			&= \tau_t^{-1} \circ P_\lambda \circ (I \otimes C) \circ P_\lambda^* 
					\circ \tau_t  \\
			&= P_\lambda \circ \tau_t^{-1} \circ (I \otimes C) \circ \tau_t 
				\circ P_\lambda^*   \\
			&= P_\lambda \circ \big( I 
					\otimes (\hat{\tau}_t^{-1} \circ C \circ \hat{\tau}_t) \big)
				\circ P_\lambda^*  \\	
			&= P_\lambda \circ (I \otimes C) \circ P_\lambda^* 
				= T^{(\lambda)}_{I \otimes C},
	\end{align*}
	where we have used in the second to last identity that convolution type operators of $L^2(\R)$ commute with the translations $\hat{\tau}_t$. We conclude that $T^{(\lambda)}_{I \otimes C}$ does belong to $\mfT\big(\cA^2_\lambda(\C_+)\big)$. This completes the proof.
\end{proof}

\begin{remark}\label{rmk:AlambdaConvolutionType}
	Theorem~\ref{thm:AlambdaConvolutionType} allows to more precisely understand the structure of the von Neumann algebra $\mfA_\lambda$ introduced in Theorem~\ref{thm:TranslationInvariantOperatorsVonNeumann}: it consists of bounded operators that act on $L^2(\C_+, v_\lambda) = L^2(\R_+, \widehat{v}_\lambda) \otimes L^2(\R)$ through a convolution type operator on the factor $L^2(\R)$, and precisely with functions $h_{(0)}$, which are the extension by zero of $h \in L^\infty(\R_+)$, i.e.~ all possible Bargmann spectral functions $h$ of the operators belonging to $\mfT\big(\cA^2_\lambda(\C_+)\big)$.
	
	Through Theorem~\ref{thm:AlambdaConvolutionType} we also described $\mfT\big(\cA^2_\lambda(\C_+)\big)$ as the Toeplitz operators associated to $I\otimes C$ where $C$ runs through all convolution type operators of $L^2(\R)$.
\end{remark}

\section{Fourier intertwining and imaginary part independent operators}
\label{sec:MoreTranslationInvOp}
In the previous section, we have considered operators of the form $I \otimes C$, with respect to the tensor product decomposition $L^2(\C_+, v_\lambda) = L^2(\R_+, \widehat{v}_\lambda) \otimes L^2(\R)$, where $C$ is a convolution type operator. More precisely, these operators can be written as
\[
	I \otimes C = I \otimes (\cF \circ M_{h_{(0)}} \circ \cF^{-1}) 
		= \cF_\lambda \circ (I \otimes M_{h_{(0)}}) \circ \cF_\lambda^{-1},
\]
where $\cF_\lambda = I \otimes \cF$ (see~section~\ref{sec:Bargmann}) and $h_{(0)}$ is the extension by $0$ of a given $h \in L^\infty(\R_+)$ (see~Definition~\ref{def:hextensions}). The relevant properties of these operators in our study of Toeplitz operators was established in Theorems~\ref{thm:TranslationInvariantOperatorsVonNeumann} and \ref{thm:AlambdaConvolutionType}.

We will now consider a sort of complementary type of operators: those that commute with $\cF_\lambda$, instead being obtained from conjugation by $\cF_\lambda$.

\begin{definition}\label{def:Flambda-intertwining}
	A bounded operator $A$ of $L^2(\C_+, v_\lambda)$ will be called \textbf{Fourier intertwining for $L^2(\C_+, v_\lambda)$}, or simply \textbf{$\cF_\lambda$-intertwining}, when $\cF_\lambda \circ A = A \circ \cF_\lambda$. In this case, the corresponding Toeplitz operator $T^{(\lambda)}_A$, acting on $\cA^2_\lambda(\C_+)$ (for $\lambda > -1$), will be said to have \textbf{Fourier intertwining operator symbol}.
\end{definition}

Another useful family of operators is introduced in the next definition. For this, we consider again the tensor product decomposition of $L^2(\C_+, v_\lambda)$.

\begin{definition}\label{def:ImPartInd}
	A bounded operator $A$ of $L^2(\C_+, v_\lambda)$ will be called \textbf{imaginary part independent} if and only if there is a bounded operator $\widehat{A}$ of $L^2(\R_+, \widehat{v}_\lambda)$ such that $A = \widehat{A} \otimes I$ with respect to the tensor product decomposition $L^2(\C_+, v_\lambda) = L^2(\R_+, \widehat{v}_\lambda) \otimes L^2(\R)$ (see~\eqref{eq:L2-tensordecomposition}). With this notation, the corresponding Toeplitz operator $T^{(\lambda)}_A$, acting on $\cA^2_\lambda(\C_+)$ (for $\lambda > -1$), will be said to have \textbf{imaginary part independent operator symbol}.
\end{definition}

We establish some first properties of this sort of operators. 

\begin{proposition}\label{prop:ImPartInd}
	Let $A = \widehat{A} \otimes I$ be an imaginary part independent operator of $L^2(\C_+,v_\lambda)$, where $\widehat{A}$ is a bounded operator of $L^2(\R_+, \widehat{v}_\lambda)$. Then, the following properties are satisfied.
	\begin{enumerate}
		\item The operator $A$ is $\cF_\lambda$-intertwining (see~Definition~\ref{def:Flambda-intertwining}) and translation invariant.
		\item For every $f \in L^2(\R_+, \widehat{v}_\lambda)$ and $g \in L^2(\R)$ we have
		\[
			\big(A(f \otimes g)\big)(x + iy) 
				= \big(\widehat{A}(f)\big)(x) g(y),
		\]
		for almost every $x + iy \in \C_+$.
	\end{enumerate}
	In particular, the Toeplitz operator $T^{(\lambda)}_A$ is translation invariant.
\end{proposition}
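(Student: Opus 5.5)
Everything follows from how operators of the form $B_1 \otimes B_2$ act on elementary tensors in the decomposition \eqref{eq:L2-tensordecomposition}, together with the mixed-product rule $(B_1 \otimes B_2)\circ(B_1' \otimes B_2') = (B_1 B_1') \otimes (B_2 B_2')$. I will prove (2) first and then use the same structure for (1) and for the final claim.

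For (2), I use the identification $f \otimes g \mapsto \big(x + iy \mapsto f(x) g(y)\big)$, which is the one that realizes \eqref{eq:L2-tensordecomposition}. By definition of the tensor product of operators, $(\widehat{A} \otimes I)(f \otimes g) = \widehat{A}(f) \otimes g$. Reading this through the same identification gives $\big(A(f\otimes g)\big)(x+iy) = \big(\widehat{A}(f)\big)(x)\, g(y)$ almost everywhere, which is (2).

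For (1), recall that $\cF_\lambda = I \otimes \cF$. The mixed-product rule gives
\[
	\cF_\lambda \circ A = (I \otimes \cF)(\widehat{A} \otimes I) = \widehat{A} \otimes \cF = (\widehat{A} \otimes I)(I \otimes \cF) = A \circ \cF_\lambda .
\]
Strictly, the rule is first verified on elementary tensors $f \otimes g$, whose finite linear combinations are dense. Both sides are bounded, so equality extends to all of $L^2(\C_+, v_\lambda)$. For translation invariance I use \eqref{eq:tautensor}, namely $\tau_t = I \otimes \hat{\tau}_t$ with $\hat{\tau}_t$ the translation on $L^2(\R)$. The identical computation then gives $\tau_t \circ A = \widehat{A} \otimes \hat{\tau}_t = A \circ \tau_t$ for every $t \in \R$. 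As an alternative check, on elementary tensors both sides send $f \otimes g$ to the function $x+iy \mapsto (\widehat{A}f)(x)\, g(y-t)$, by (2).

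The final assertion that $T^{(\lambda)}_A$ is translation invariant is then immediate from Proposition~\ref{prop:ToepOpInvariance}, since $A \in \mfT\big(L^2(\C_+, v_\lambda)\big)$ by (1).

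The main obstacle is not conceptual but bookkeeping: one must justify that identities checked on elementary tensors extend to the whole space. This follows from density of the algebraic tensor product together with boundedness of all operators involved. One must also make sure that the variable $y$ in (2) refers to the second tensor factor of \eqref{eq:L2-tensordecomposition}, which is the same factor on which $\cF$ and $\hat{\tau}_t$ act. I expect no genuine difficulty beyond stating these two points clearly.
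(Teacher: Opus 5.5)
Your proposal is correct and follows essentially the same route as the paper: both use $\cF_\lambda = I\otimes\cF$ and $\tau_t = I\otimes\hat{\tau}_t$ to get (1), read (2) off from $(\widehat{A}\otimes I)(f\otimes g) = \widehat{A}(f)\otimes g$, and deduce the final claim from Proposition~\ref{prop:ToepOpInvariance}. Your remarks on extending identities from elementary tensors by density and boundedness just make explicit what the paper leaves implicit.
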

\begin{proof}
	Recall that $\cF_\lambda = I \otimes \cF$. Also, as obtained in the proof of Theorem~\ref{thm:AlambdaConvolutionType}, the translation operators of $L^2(\C_+, v_\lambda)$ satisfy, for every $t \in \R$, the identity
	\[
		\tau_t = I \otimes \hat{\tau}_t,
	\]
	where $\hat{\tau}_t$ is the corresponding translation operator of $L^2(\R)$ (see~\eqref{eq:tautensor}). Hence, the claims in (1) are immediate consequences of our choice for $A$.

	If we consider $f \in L^2(\R_+, \widehat{v}_\lambda)$ and $g \in L^2(\R)$, then we have $A(f\otimes g) = \big(\widehat{A}(f)\big) \otimes g$, and so evaluation at $x + iy \in \C_+$ yields (2).
	
	The last claim is a consequence of (1) and Proposition~\ref{prop:ToepOpInvariance}.
\end{proof}

The next result clarifies the meaning of the notions just introduced for the case of function symbols.

\begin{corollary}\label{cor:ImPartIndSymbols}
	Let $a \in L^\infty(\C_+)$ be given. Then, the following properties are equivalent.
	\begin{enumerate}
		\item The symbol $a$ is translation invariant.
		\item For some (and hence every) $\lambda > -1$, the operator $M_a$ acting on $L^2(\C_+, v_\lambda)$ is imaginary part independent.
	\end{enumerate}
\end{corollary}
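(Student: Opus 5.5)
The idea is to reduce both conditions to the same statement about the function $a$: that $a$ does not depend on the imaginary part variable. Here ``$a$ is translation invariant'' means $a(z - it) = a(z)$ for almost every $z \in \C_+$ and every $t \in \R$. Equivalently, $M_a$ commutes with every $\tau_t$, since $\tau_t \circ M_a \circ \tau_t^{-1} = M_{a(\cdot - it)}$.

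The first step is to show that translation invariance of $a$ is equivalent to the existence of $\widehat{a} \in L^\infty(\R_+)$ with $a(x+iy) = \widehat{a}(x)$ for almost every $x + iy \in \C_+$. One direction is immediate. For the other, I would average: fix a nonnegative $\phi \in C_c(\R)$ with $\int \phi = 1$ and set $\widehat{a}(x) = \int_\R a(x+iy)\phi(y)\dif y$. By Fubini and the invariance, for each $t$ the functions $a(x+iy)$ and $a(x+i(y-t))$ agree almost everywhere. Integrating $|a(x+iy) - \widehat a(x)|$ over compact sets and using Fubini in $(x,y,t)$ shows that $a(x+iy) = \widehat{a}(x)$ almost everywhere. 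This measure-theoretic step, where an ``every $t$, almost every $z$'' condition must be converted into an almost-everywhere identity, is the only delicate point. I would handle it by integrating the null-set condition over $t \in \R$ and applying Tonelli to the set $\{(z,t) : a(z-it) \neq a(z)\}$.

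Next, if $a(x+iy) = \widehat a(x)$, then $M_a = M_{\widehat a} \otimes I$ with respect to \eqref{eq:L2-tensordecomposition}. This is the same computation as in the proof of Theorem~\ref{thm:AlambdaConvolutionType}: check it on elementary tensors $f \otimes g$ and conclude by density. This holds for every $\lambda > -1$, so (1) implies (2) for all $\lambda$ at once.

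Conversely, suppose $M_a = \widehat{A} \otimes I$ for some $\lambda$. By Proposition~\ref{prop:ImPartInd}(1), $M_a$ is translation invariant, so $\tau_t M_a \tau_t^{-1} = M_a$ for all $t$. By the remark above this gives $M_{a(\cdot - it)} = M_a$, and hence $a(\cdot - it) = a$ almost everywhere, because a bounded multiplier determines its symbol almost everywhere. This is (1). The ``some (and hence every)'' clause then follows: (2) for one $\lambda$ gives (1), which gives (2) for all $\lambda$. One can also recover $\widehat A = M_{\widehat a}$ directly, but this is not needed.
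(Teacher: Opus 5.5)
Your proof is correct, but for (2)$\Rightarrow$(1) it takes a different route from the paper.

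The paper argues by evaluation on a single elementary tensor. It picks continuous, everywhere positive $f \in L^2(\R_+,\widehat{v}_\lambda)$ and $g \in L^2(\R)$. Proposition~\ref{prop:ImPartInd}(2) then gives $a(x+iy)f(x)g(y) = \big(\widehat{A}(f)\big)(x)\,g(y)$ almost everywhere, and dividing shows directly that $a$ agrees almost everywhere with the function $\widehat{A}(f)/f$ of the real part alone.

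You instead combine three facts. First, Proposition~\ref{prop:ImPartInd}(1): $\widehat{A}\otimes I$ commutes with $\tau_t = I\otimes\hat{\tau}_t$. Second, the identity $\tau_t M_a \tau_t^{-1} = M_{a(\cdot - it)}$. Third, the faithfulness of $a \mapsto M_a$, which holds because $v_\lambda$ and Lebesgue measure have the same null sets. This gives translation invariance directly in the form ``for every $t$, $a(z-it)=a(z)$ for almost every $z$'', with no choice of test functions.

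The cost is that your (1)$\Rightarrow$(2) then needs the averaging/Tonelli lemma, which turns that form into ``$a(x+iy)=\widehat{a}(x)$ almost everywhere''. Your sketch of this lemma is sound. The paper uses exactly this implication at the start of its own (1)$\Rightarrow$(2) but simply asserts it, so your write-up is more complete on that measure-theoretic point. In exchange, the paper's division argument is shorter and has the small bonus of producing $\widehat{a}$ explicitly from $\widehat{A}$.
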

\begin{proof}
	Assuming (1), the function $a$ is translation invariant, thus implying the existence of a function $\widehat{a} \in L^\infty(\R_+)$ such that
	\[
		a(x + iy) = \widehat{a}(x),
	\]
	for almost every $x + i y \in \C_+$. Hence, we have
	\[
		M_a = M_{\widehat{a}} \otimes I,
	\]
	which yields (2).
	
	Let us now assume that (2) holds. Then, there exists a bounded operator $\widehat{A}$ of $L^2(\R_+, \widehat{v}_\lambda)$ such that $M_a = \widehat{A} \otimes I$. Let us choose continuous everywhere positive functions $f \in L^2(\R_+, \widehat{v}_\lambda)$ and $g \in L^2(\R)$. Proposition~\ref{prop:ImPartInd}(2) implies
	\[
	a(x + iy)f(x)g(y) = \big(\widehat{A}(f)\big)(x) g(y),
	\]
	for almost every $x + iy \in \C_+$. Hence, our choice of $f,g$ leads to
	\[
	a(x + iy) = \frac{\big(\widehat{A}(f)\big)(x)}{f(x)},
	\]
	for almost every $x + iy \in \C_+$, as well. This clearly implies (1).
\end{proof}

\begin{remark}\label{rmk:ImPartInd}
	Corollary~\ref{cor:ImPartIndSymbols} has established that $a \in L^\infty(\C_+)$ is independent of its imaginary part variable if and only if the multiplier operator $M_a$ is imaginary part independent as introduced in Definition~\ref{def:ImPartInd}. This explains our choice of notation. 
	
	On the other hand, there are plenty of imaginary part independent operators that are not multipliers by functions: e.g.~any operator of the form $\widehat{A} \otimes I$, where $\widehat{A}$ is a bounded operator of $L^2(\R_+, \widehat{v}_\lambda)$ that is not a multiplier by a function. Note that this last claim follows from the proof of Corollary~\ref{cor:ImPartIndSymbols}. By Proposition~\ref{prop:ImPartInd}(1), we also conclude the existence of plenty $\cF_\lambda$-intertwining translation invariant operators beyond those of the form $M_a$, where $a \in L^\infty(\C_+)$ is translation invariant. 
\end{remark}

We obtain another property for imaginary part independent operators that further explains this notation. It will be useful in the computation of Bargmann spectral functions. 

\begin{proposition}\label{prop:ImPartInd-withfunction}
	Let $A$ be an imaginary part independent bounded operator of $L^2(\C_+, v_\lambda)$. Let $b \in L^\infty(\C_+)$ be a function that does not depend on its real part variable, i.e.~for which there exists $\widehat{b} \in L^\infty(\R)$ such that $b(x + i\xi) = \widehat{b}(\xi)$ for almost every $x + i\xi \in \C_+$. Then, we have
	\[
		A(b\psi) = b A(\psi),
	\]
	for every $\psi \in L^2(\C_+, v_\lambda)$.
\end{proposition}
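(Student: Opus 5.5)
The plan is to observe that multiplication by $b$ is itself an operator of tensor form, namely $M_b = I \otimes M_{\widehat{b}}$ with respect to the decomposition \eqref{eq:L2-tensordecomposition}, and then to use that operators of the form $\widehat{A} \otimes I$ and $I \otimes B$ commute.

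First, $M_b$ is a bounded operator of $L^2(\C_+, v_\lambda)$ since $b \in L^\infty(\C_+)$. To identify its tensor form, take elementary tensors $f \otimes g$ with $f \in L^2(\R_+, \widehat{v}_\lambda)$ and $g \in L^2(\R)$. For almost every $x + i\xi \in \C_+$ we have
\[
	\big(M_b(f \otimes g)\big)(x + i\xi) = \widehat{b}(\xi) f(x) g(\xi)
		= \big((I \otimes M_{\widehat{b}})(f \otimes g)\big)(x + i\xi).
\]
Here $M_{\widehat{b}}$ is bounded on $L^2(\R)$ because $\widehat{b} \in L^\infty(\R)$. Finite linear combinations of elementary tensors are dense, so $M_b = I \otimes M_{\widehat{b}}$, exactly as in the proof of Theorem~\ref{thm:AlambdaConvolutionType}.

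Next, write $A = \widehat{A} \otimes I$ as in Definition~\ref{def:ImPartInd}. On elementary tensors,
\[
	(\widehat{A} \otimes I)(I \otimes M_{\widehat{b}})(f \otimes g) = \widehat{A}(f) \otimes \widehat{b} g
		= (I \otimes M_{\widehat{b}})(\widehat{A} \otimes I)(f \otimes g).
\]
Both composites are bounded, and they agree on a dense set, so $A \circ M_b = M_b \circ A$. Applied to any $\psi \in L^2(\C_+, v_\lambda)$, this gives $A(b\psi) = bA(\psi)$.

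I do not expect a real obstacle. The only care needed is the density and boundedness bookkeeping: extending the identities from the algebraic tensor product to all of $L^2(\C_+, v_\lambda)$ requires both sides to be bounded, which holds since $\widehat{A}$ is bounded and $b$ is essentially bounded.
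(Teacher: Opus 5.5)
Your proposal is correct and follows the paper's proof exactly: the paper also writes $M_b = I \otimes M_{\widehat{b}}$ and concludes from the fact that $\widehat{A} \otimes I$ commutes with $I \otimes M_{\widehat{b}}$. Your density and boundedness argument on elementary tensors simply makes explicit what the paper leaves implicit.
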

\begin{proof}
	Let us write $A = \widehat{A} \otimes I$ for some bounded operator $\widehat{A}$ of $L^2(\R_+, \widehat{v}_\lambda)$. For $b$ and $\widehat{b}$ as in the statement, we have
	\[
		M_b = I \otimes M_{\widehat{b}}.
	\]
	Thus, the result follows from the fact that the operators $\widehat{A} \otimes I$ and $I \otimes M_{\widehat{b}}$ commute with each other.
\end{proof}

\section{Bargmann spectral functions of Toeplitz operators with Fourier intertwining operator symbols}
\label{sec:Bargmann-spectral}
In this section we compute the Bargmann spectral function for Toeplitz operators with translation invariant operator symbols of the type introduced in section~\ref{sec:MoreTranslationInvOp}. We start with Fourier intertwining operators.

\begin{theorem}\label{thm:spectral-FourierIntertwining}
	Let $A$ be a bounded operator of $L^2(\C_+, v_\lambda)$ which is $\cF_\lambda$-intertwining. If $A$ is also translation invariant, then the Bargmann spectral function of the Toeplitz operator $T^{(\lambda)}_A$ is the (unique) function $h_A : \R_+ \rightarrow \C$ that satisfies
	\begin{equation}\label{eq:spectral-FourierIntertwining}
		h_A \varphi = \big(V_\lambda^* \circ A \circ V_\lambda\big)(\varphi),
	\end{equation}
	for every $\varphi \in L^2(\R_+)$. In particular, for every such $\varphi$ we have
	\begin{equation}\label{eq:spectral-FourierIntegral}
		h_A(\xi) \varphi(\xi) 
			= \frac{(2\sqrt{\xi})^{\lambda + 1}}{\Gamma(\lambda + 1)}
						\int_{\R_+} \big(A(\Phi)\big)(x + i \xi)
							e^{-x\xi} x^\lambda \dif x,
	\end{equation}
	for almost every $\xi \in \R_+$, where the function $\Phi \in L^2(\C_+, v_\lambda)$ is defined by
	\[
		\Phi(x + i\xi) = \xi^{\frac{1}{2}(\lambda + 1)}
					\varphi(\xi) e^{-x\xi} \chi_{\R_+}(\xi),
	\]
	for almost every $x + i\xi \in \C_+$.
\end{theorem}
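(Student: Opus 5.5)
Since $A$ is translation invariant, Proposition~\ref{prop:ToepOpInvariance} shows $T^{(\lambda)}_A \in \mfT\big(\cA^2_\lambda(\C_+)\big)$. Proposition~\ref{prop:TranslationInvariance} then gives a unique $h \in L^\infty(\R_+)$ with $T^{(\lambda)}_A = \cB_\lambda \circ M_h \circ \cB_\lambda^{-1}$. So the task is to show $M_h = V_\lambda^* \circ A \circ V_\lambda$, which yields \eqref{eq:spectral-FourierIntertwining} with $h_A = h$. Uniqueness of $h_A$ is automatic, since $M_h$ determines $h$ almost everywhere.

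The computation repeats the argument in the proof of Theorem~\ref{thm:TranslationInvariantOperatorsVonNeumann}. Identity \eqref{eq:Plambda-Vlambda} gives $P_\lambda = \cB_\lambda \circ V_\lambda^* \circ \cF_\lambda^{-1}$, and taking adjoints gives $P_\lambda^* = \cF_\lambda \circ V_\lambda \circ \cB_\lambda^{-1}$. Therefore
\[
	\cB_\lambda^{-1} \circ T^{(\lambda)}_A \circ \cB_\lambda
		= V_\lambda^* \circ \cF_\lambda^{-1} \circ A \circ \cF_\lambda \circ V_\lambda
		= V_\lambda^* \circ A \circ V_\lambda,
\]
where the last step uses the $\cF_\lambda$-intertwining hypothesis, which implies $\cF_\lambda^{-1} A \cF_\lambda = A$. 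Comparing with $M_h$ gives $V_\lambda^* A V_\lambda = M_h$, which is \eqref{eq:spectral-FourierIntertwining}. Note that translation invariance is used only to obtain the multiplier form; without it, $V_\lambda^* A V_\lambda$ need not be a multiplier.

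For \eqref{eq:spectral-FourierIntegral}, write $V_\lambda(\varphi) = c\,\Phi$ with $c = \sqrt{\pi 2^{\lambda+3}/\Gamma(\lambda+2)}$, using the formula in Proposition~\ref{prop:Vlambda}. By linearity, $A(V_\lambda\varphi) = c\,A(\Phi)$. Apply the first expression for $V_\lambda^*$ in Proposition~\ref{prop:Vlambda}; its prefactor can be rewritten as $\sqrt{\pi 2^{\lambda+3}\xi^{\lambda+1}/\Gamma(\lambda+2)}\cdot\frac{\lambda+1}{4\pi}$. The constant then becomes
\[
	c^2\,\frac{\lambda+1}{4\pi}\,\xi^{\frac12(\lambda+1)}
		= \frac{\pi 2^{\lambda+3}}{\Gamma(\lambda+2)} \cdot \frac{\lambda+1}{4\pi}\,\xi^{\frac12(\lambda+1)}
		= \frac{2^{\lambda+1}\xi^{\frac12(\lambda+1)}}{\Gamma(\lambda+1)}
		= \frac{(2\sqrt{\xi})^{\lambda+1}}{\Gamma(\lambda+1)}.
\]
The only real care needed is this constant bookkeeping, together with noting that $\Phi \in L^2(\C_+, v_\lambda)$ because it is a scalar multiple of $V_\lambda\varphi$. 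The integral formula for $V_\lambda^*$ holds for every $\psi \in L^2$ at almost every $\xi$, because $\int e^{-2x\xi}x^\lambda\,dx<\infty$ makes it an absolutely convergent integral for a.e.\ $\xi$ by Cauchy–Schwarz. No obstacle beyond this is expected.
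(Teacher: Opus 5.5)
Your proposal is correct and follows essentially the same route as the paper: you obtain $h_A$ from Propositions~\ref{prop:ToepOpInvariance} and \ref{prop:TranslationInvariance}, and reduce $\cB_\lambda^{-1} T^{(\lambda)}_A \cB_\lambda$ to $V_\lambda^* \cF_\lambda^{-1} A \cF_\lambda V_\lambda = V_\lambda^* A V_\lambda$. The paper phrases that reduction via $\cB_\lambda^*\cB_\lambda = I$ and $\cB_\lambda\cB_\lambda^* = P_\lambda$, and you phrase it via the factorization \eqref{eq:Plambda-Vlambda}, which amounts to the same thing; your constant bookkeeping with $V_\lambda(\varphi) = c\,\Phi$ is right, and your Cauchy--Schwarz remark on the a.e.\ absolute convergence of the integral formula for $V_\lambda^*$ is a small justification the paper leaves implicit.
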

\begin{proof}
	The translation invariance of $A$ and Proposition~\ref{prop:ToepOpInvariance} imply that $T^{(\lambda)}_A$ is translation invariant as well. Hence, Proposition~\ref{prop:TranslationInvariance} implies the existence of a (unique) function $h_A \in L^\infty(\R_+)$ such that 
	\[
		M_{h_A} = \cB_\lambda^{-1} \circ T^{(\lambda)}_A \circ \cB_\lambda.
	\]
	We now proceed to compute the values of the latter.
	
	Recall, from Theorem~\ref{thm:Bargmann-diagram}, that $\cB_\lambda = \cF_\lambda \circ V_\lambda$, which yields
	\[
		\cB_\lambda^* \circ \cB_\lambda = V_\lambda^* \circ V_\lambda 
				= I_{L^2(\R_+)},
	\]
	the identity operator in $L^2(\R_+)$, because $V_\lambda$ and $V_\lambda^*$ are the inclusion and the 
	projection, respectively, of $L^2(\R_+)$ with respect to $L^2(\C_+, v_\lambda)$. Hence, we also have
	\[
	\cB_\lambda \circ \cB_\lambda^* 
	= \cF_\lambda \circ V_\lambda \circ V_\lambda^* \circ \cF_\lambda^{-1} 
	= P_\lambda,
	\]
	where we now use the commutative diagram \eqref{eq:Bargmann-diagram} and the fact that $\cF_\lambda$ is unitary. 
	
	The three displayed formulas above, and the definition of Toeplitz operator, allow us to compute as follows
	\begin{align*}
		M_{h_A} 
		&= \cB_\lambda^* \circ P_\lambda^* \circ A \circ P_\lambda \circ \cB_\lambda \\
		&= \cB_\lambda^* \circ \cB_\lambda \circ \cB_\lambda^*
		\circ A \circ \cB_\lambda \circ \cB_\lambda^* \circ \cB_\lambda \\
		&= \cB_\lambda^* \circ A \circ \cB_\lambda \\
		&= V_\lambda^* \circ \cF_\lambda^{-1} \circ A \circ \cF_\lambda \circ V_\lambda \\
		&= V_\lambda^* \circ A \circ V_\lambda,
	\end{align*}
	where we have used the $\cF_\lambda$-intertwining property of $A$ in the last identity. Evaluation at $\varphi \in L^2(\C_+, v_\lambda)$ yields \eqref{eq:spectral-FourierIntertwining}.
	
	Let us now fix $\varphi \in L^2(\R_+)$. Note that $\Phi$, as defined in the statement, is a constant multiple of $V_\lambda(\varphi)$, and so it does belong to $L^2(\C_+, v_\lambda)$. Using the expressions for $V_\lambda$ and $V_\lambda^*$ obtained in Proposition~\ref{prop:Vlambda} we compute, for almost every $\xi \in \R_+$, as follows
	\begin{align*}
		h_A(\xi) \varphi(\xi) 
			&= \sqrt{\frac{2^\lambda(\lambda + 1)\xi^{\lambda + 1}}{2\pi \Gamma(\lambda + 1)}}
					\sqrt{\frac{\pi 2^{\lambda + 3}}{\Gamma(\lambda + 2)}}
						\int_{\R_+} \big(A(\Phi)\big)(x + i\xi)
							e^{-x\xi} x^\lambda \dif x  \\
		&= \frac{(2\sqrt{\xi})^{\lambda + 1}}{\Gamma(\lambda + 1)}
				\int_{\R_+} \big(A(\Phi)\big)(x + i\xi)
							e^{-x\xi} x^\lambda \dif x,
	\end{align*}
	where the first identity uses the linearity of $A$ and that $\Phi$ and $V_\lambda(\varphi)$ differ by a constant factor. This yields \eqref{eq:spectral-FourierIntegral}.
\end{proof}

As a consequence, we obtain the Bargmann spectral formula for the case of a multiplier operator by a translation invariant function. This formula was already obtained in \cite{GKVParabolic} (see also \cite[Theorem~4.3.5]{BarreraQuirogaHeisenberg} and \cite{HHMVerticalWeighted}).

\begin{corollary}\label{cor:spectral-FourierIntertwining}
	Let $a \in L^\infty(\C_+)$ be a translation invariant symbol and $\widehat{a} \in L^\infty(\R_+)$ the function such that $a(x + i\xi) = \widehat{a}(x)$, for almost every $x + i\xi \in \C_+$. Then, the Bargmann spectral function $h_a \in L^\infty(\R_+)$ defined such that
	\[
		M_{h_a} = \cB_\lambda^{-1} \circ T^{(\lambda)}_a \circ \cB_\lambda,
	\]
	is given by
	\[
		h_a(\xi) = \frac{(2\xi)^{\lambda + 1}}{\Gamma(\lambda + 1)}
			\int_{\R_+} \widehat{a}(x) e^{-2x\xi} x^\lambda \dif x,
	\]
	for almost every $\xi \in \R_+$.
\end{corollary}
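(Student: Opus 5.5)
The plan is to apply Theorem~\ref{thm:spectral-FourierIntertwining} to the operator $A = M_a$. First I would check its hypotheses. Since $a$ is translation invariant, Corollary~\ref{cor:ImPartIndSymbols} gives $M_a = M_{\widehat{a}} \otimes I$, so $M_a$ is imaginary part independent. Proposition~\ref{prop:ImPartInd}(1) then shows that $M_a$ is both $\cF_\lambda$-intertwining and translation invariant. Hence the Bargmann spectral function of $T^{(\lambda)}_a = T^{(\lambda)}_{M_a}$ is the function $h_a = h_{M_a}$ characterized by \eqref{eq:spectral-FourierIntegral}.

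Next I would evaluate \eqref{eq:spectral-FourierIntegral} directly. Fix $\varphi \in L^2(\R_+)$ and let $\Phi$ be as in that theorem. Then
\[
	\big(M_a(\Phi)\big)(x + i\xi) = \widehat{a}(x)\, \xi^{\frac{1}{2}(\lambda+1)} \varphi(\xi) e^{-x\xi}
\]
for $\xi > 0$. Substituting into \eqref{eq:spectral-FourierIntegral}, the factor $\xi^{\frac{1}{2}(\lambda+1)}\varphi(\xi)$ comes out of the $x$-integral, and the two exponentials combine into $e^{-2x\xi}$. This gives
\[
	h_a(\xi)\varphi(\xi) = \frac{(2\sqrt{\xi})^{\lambda+1}\xi^{\frac{1}{2}(\lambda+1)}}{\Gamma(\lambda+1)} \varphi(\xi) \int_{\R_+} \widehat{a}(x) e^{-2x\xi} x^\lambda \dif x,
\]
and the constant simplifies to $(2\xi)^{\lambda+1}/\Gamma(\lambda+1)$. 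The integral converges absolutely for $\xi>0$ because $\widehat{a}$ is bounded and $\int_{\R_+} e^{-2x\xi}x^\lambda \dif x = \Gamma(\lambda+1)/(2\xi)^{\lambda+1}$, as computed in Proposition~\ref{prop:CR-Fourier}. This also shows $|h_a| \le \|\widehat{a}\|_\infty$.

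Finally I would cancel $\varphi$. Choosing $\varphi$ everywhere positive on $\R_+$ (for instance $\varphi(\xi) = e^{-\xi}$) lets me divide and obtain the stated formula for almost every $\xi$. Uniqueness of the Bargmann spectral function (Definition~\ref{def:spectralFromBargmann}) then identifies this function with $h_a$. The only delicate point is the bookkeeping of constants: making sure that the normalization of $\Phi$ relative to $V_\lambda(\varphi)$ was already absorbed into the constant in \eqref{eq:spectral-FourierIntegral}, so the powers of $2$ and $\xi$ combine exactly to $(2\xi)^{\lambda+1}$.
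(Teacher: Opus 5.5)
Your proposal is correct and follows the paper's proof. You use Corollary~\ref{cor:ImPartIndSymbols} and Proposition~\ref{prop:ImPartInd}(1) to see that $M_a$ is $\cF_\lambda$-intertwining and translation invariant, then substitute $M_a(\Phi)$ into \eqref{eq:spectral-FourierIntegral}. Your constant check $(2\sqrt{\xi})^{\lambda+1}\xi^{\frac{1}{2}(\lambda+1)} = (2\xi)^{\lambda+1}$ and the explicit cancellation of an everywhere-positive $\varphi$ just fill in details the paper leaves to ``substitution''.
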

\begin{proof}
	By Corollary~\ref{cor:ImPartIndSymbols} and Proposition~\ref{prop:ImPartInd}(1), the operator $M_a$ is $\cF_\lambda$-intertwining and translation invariant. Hence, Theorem~\ref{thm:spectral-FourierIntertwining} can be applied to $M_a$. First we note that, with the notation of the latter, we have
	\[
		\big(M_a(\Phi)\big)(x + i\xi) 
			= \widehat{a}(x) \xi^{\frac{1}{2}(\lambda + 1)} \varphi(\xi) e^{-x\xi}
	\chi_{\R_+}(\xi),
	\]
	for every $\varphi \in L^2(\R_+)$ and almost every $x + i\xi \in \C_+$. The result now follows by substitution in \eqref{eq:spectral-FourierIntegral}.
\end{proof}

\begin{remark}\label{rmk:spectral-FourierIntertwining}
	Corollary~\ref{cor:spectral-FourierIntertwining} and its proof show that Theorem~\ref{thm:spectral-FourierIntertwining} extends the ``diagonalization'' property and corresponding spectral formula for Toeplitz operators from the already known case of translation invariant function symbols to the case of translation invariant operator symbols, when the latter are Fourier intertwining as introduced in Definition~\ref{def:Flambda-intertwining}.
	
	On the other hand, this generalization is achieved through formulas that enlighten the behavior of Toeplitz operators with translation invariant operator symbols as we now explain. The Toeplitz operator with operator symbol $A$ is defined~by
	\[
	T^{(\lambda)}_A = P_\lambda \circ A \circ P_\lambda^*,
	\]
	while the proof of Theorem~\ref{thm:spectral-FourierIntertwining} and its formula \eqref{eq:spectral-FourierIntertwining} have shown that $T^{(\lambda)}_A$ corresponds, up to the Bargmann transform $\cB_\lambda$, to the operator
	\[
	S^{(\lambda)}_A = V_\lambda^* \circ A \circ V_\lambda,
	\]
	when $A$ is Fourier intertwining and translation invariant. Since, $V_\lambda$ and $V_\lambda^*$ are the inclusion and projection of $L^2(\R_+)$ as a Hilbert subspace of $L^2(\C_+, v_\lambda)$, the operator $S^{(\lambda)}_A$ is itself a sort of Toeplitz operator for this pair of Hilbert spaces and with the same operator symbol $A$. Hence, for any Fourier intertwining translation invariant operator $A$, the Bargmann transform $\cB_\lambda$ translates the Toeplitz operator $T^{(\lambda)}_A$ of $\cA^2_\lambda(\C_+)$ into the Toeplitz operator $S^{(\lambda)}_A$ of $L^2(\R_+)$. Note that with the current choices, the latter is also the multiplier operator $M_{h_A}$ (see~\eqref{eq:spectral-FourierIntertwining}).
	
	Finally, we recall that the spectral formula obtained in Corollary~\ref{cor:spectral-FourierIntertwining} implies that the $C^*$-algebra generated by Toeplitz operators with translation invariant function symbols correspond, under the Bargmann transform, to the space of functions $h \in C(\R_+)$ such that $h \circ \exp$ is uniformly continuous. This last claim, within our current notation, is the content of \cite[Theorem~2]{HHMVerticalWeighted}.
\end{remark}

The next result yields the Bargmann spectral functions of Toeplitz operators whose symbols are imaginary part independent operators. 

\begin{theorem}\label{thm:ImPartInd-spectral}
	Let us fix $\lambda > -1$, and consider the auxiliary function
	\begin{align}
		\alpha_\lambda : \R_+ &\rightarrow \R  \label{eq:alpha_lambda} \\
		\alpha_\lambda(\xi) &= \xi^{\frac{1}{2}(\lambda + 1)} e^{-\xi}. \notag
	\end{align}
	If $A$ is an imaginary part independent operator of $L^2(\C_+, v_\lambda)$, then the Bargmann spectral function $h_A \in L^\infty(\R_+)$ of the Toeplitz operator $T^{(\lambda)}_A$ is given~by
	\begin{equation}\label{eq:ImPartInd-spectral}
		h_A(\xi) 
		= \frac{(2\sqrt{\xi})^{\lambda + 1}}{\Gamma(\lambda + 1)} e^\xi 
				\int_{\R_+}\big( A(\alpha_\lambda e_*)\big)(x + i \xi) 
					e^{-x\xi} x^\lambda \dif x,
	\end{equation}
	for almost every $\xi \in \R_+$, where we let $e_* : \C_+ \rightarrow \R$ be the function given by $e_*(x + i\xi) = e^{-x\xi}\chi_{\R_+}(\xi)$.
\end{theorem}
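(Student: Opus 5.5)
The plan is to reduce everything to Theorem~\ref{thm:spectral-FourierIntertwining} and then remove the dependence on the test function $\varphi$ using Proposition~\ref{prop:ImPartInd-withfunction}.

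First, by Proposition~\ref{prop:ImPartInd}(1), an imaginary part independent operator $A = \widehat{A} \otimes I$ is both $\cF_\lambda$-intertwining and translation invariant. Hence Theorem~\ref{thm:spectral-FourierIntertwining} applies, and for every $\varphi \in L^2(\R_+)$ we have
\[
	h_A(\xi)\varphi(\xi) = \frac{(2\sqrt{\xi})^{\lambda+1}}{\Gamma(\lambda+1)} \int_{\R_+} \big(A(\Phi)\big)(x+i\xi) e^{-x\xi} x^\lambda \dif x,
\]
where $\Phi(x+i\xi) = \xi^{\frac12(\lambda+1)}\varphi(\xi) e^{-x\xi}\chi_{\R_+}(\xi)$.

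Second, factor $\Phi$ as $\Phi = b \cdot (\alpha_\lambda e_*)$. Here $\alpha_\lambda$ is regarded as the function $x+i\xi \mapsto \alpha_\lambda(\xi)\chi_{\R_+}(\xi)$ on $\C_+$, and
\[
	b(x+i\xi) = e^{\xi}\varphi(\xi)\chi_{\R_+}(\xi),
\]
which depends only on $\xi$. Indeed, $b\,\alpha_\lambda e_* = e^\xi\varphi(\xi)\,\xi^{\frac12(\lambda+1)}e^{-\xi}e^{-x\xi} = \Phi$. Before using this factorization I must check that $\alpha_\lambda e_* \in L^2(\C_+, v_\lambda)$. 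Its squared norm is a constant times
\[
	\int_{\R_+} \xi^{\lambda+1}e^{-2\xi}\int_{\R_+} e^{-2x\xi}x^\lambda \dif x\dif\xi = \Gamma(\lambda+1)2^{-\lambda-1}\int_{\R_+} e^{-2\xi}\dif\xi < \infty,
\]
by the same Gamma integral as in Proposition~\ref{prop:Vlambda}. Equivalently, $\alpha_\lambda e_*$ is a constant multiple of $V_\lambda(e^{-\xi})$.

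Third, apply Proposition~\ref{prop:ImPartInd-withfunction} to get $A(\Phi) = b\,A(\alpha_\lambda e_*)$. Substituting gives
\[
	h_A(\xi)\varphi(\xi) = \frac{(2\sqrt{\xi})^{\lambda+1}}{\Gamma(\lambda+1)}\, e^{\xi}\varphi(\xi)\int_{\R_+}\big(A(\alpha_\lambda e_*)\big)(x+i\xi)e^{-x\xi}x^\lambda \dif x.
\]
Choosing $\varphi$ nonvanishing almost everywhere, for instance $\varphi(\xi)=e^{-\xi}$, and dividing by $\varphi(\xi)$ yields \eqref{eq:ImPartInd-spectral} for almost every $\xi$.

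The main obstacle is that Proposition~\ref{prop:ImPartInd-withfunction} requires $b \in L^\infty$, whereas $e^\xi\varphi(\xi)$ need not be bounded. I would handle this by first taking $\varphi = e^{-\xi}\chi_{[0,N]}$, for which $b = \chi_{[0,N]}(\xi)$ is bounded. This gives \eqref{eq:ImPartInd-spectral} for almost every $\xi \in (0,N)$. Letting $N\to\infty$ then covers almost every $\xi \in \R_+$.
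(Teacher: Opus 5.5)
Your proof is correct and is essentially the paper's argument. Both reduce to Theorem~\ref{thm:spectral-FourierIntertwining} via Proposition~\ref{prop:ImPartInd}(1), factor $\Phi = b\cdot(\alpha_\lambda e_*)$, pull $b$ out with Proposition~\ref{prop:ImPartInd-withfunction}, and divide by $\varphi$; the only difference is that the paper secures $b\in L^\infty$ by taking $\varphi\in C_c(\R_+)$, where you truncate $e^{-\xi}$.

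The truncation is actually unnecessary. For $\varphi(\xi)=e^{-\xi}$ you already have $b=\chi_{\R_+}$, and in fact $\Phi=\alpha_\lambda e_*$ exactly. So \eqref{eq:ImPartInd-spectral} is just \eqref{eq:spectral-FourierIntegral} evaluated at this single $\varphi$ and multiplied by $e^{\xi}$, with no need for Proposition~\ref{prop:ImPartInd-withfunction}; the formula therefore holds for every translation invariant $\cF_\lambda$-intertwining $A$.
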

\begin{proof} 
	By Proposition~\ref{prop:ImPartInd}(1) we can apply Theorem~\ref{thm:spectral-FourierIntertwining} and conclude that its Bargmann spectral function $h_A \in L^\infty(\R_+)$ is given by the identity \eqref{eq:spectral-FourierIntegral}. To compute $h_A$ from the latter, it is enough to find $A(\Phi)$ for suitable $\varphi \in L^2(\R_+)$, where $\Phi$ is defined in terms of $\varphi$ as in Theorem~\ref{thm:spectral-FourierIntertwining}. 
	
	Let us choose and fix any given $\varphi \in C_c(\R_+)$. Then, the function $b_\lambda : \C_+ \rightarrow \R$ defined by
	\[
		b_\lambda(x + i\xi) = \xi^{\frac{1}{2}(\lambda + 1)} \varphi(\xi) 
				\alpha_\lambda(\xi)^{-1} \chi_{\R_+}(\xi)
			= e^\xi	\varphi(\xi) \chi_{\R_+}(\xi),
	\]
	is bounded and does not depend on the real part of its variable. On the other hand, we compute the following norm in $L^2(\C_+, v_\lambda)$
	\begin{align*}
		\|\alpha_\lambda e_*\|^2 
			&= \frac{\lambda + 1}{4\pi} \int_{\R_+^2}
					\xi^{\lambda + 1} e^{-2\xi} e^{-2x\xi} x^\lambda \dif x \dif \xi \\
			&= \frac{\lambda + 1}{4\pi}\int_{\R_+} 
				\xi^{\lambda + 1} e^{-2\xi}\frac{\Gamma(\lambda + 1)}{(2\xi)^{\lambda + 1}}
				\dif \xi  \\
			&= \frac{\Gamma(\lambda + 2)}{2^{\lambda + 4}\pi} < +\infty,
	\end{align*}
	which implies that $\alpha_\lambda e_* \in L^2(\C_+, v_\lambda)$.
	
	Now observe that
	\[
		\Phi = b_\lambda \cdot (\alpha_\lambda e_*),
	\]
	where we have proved that the first factor belongs to $L^\infty(\C_+)$ and does not depend on the real part of its variable, while the second factor belongs to $L^2(\C_+, v_\lambda)$. Hence, we can apply Proposition~\ref{prop:ImPartInd-withfunction} to conclude that
	\[
		A(\Phi) = b_\lambda A(\alpha_\lambda e_*).
	\]
	Once this expression has been achieved, we apply \eqref{eq:spectral-FourierIntegral} from Theorem~\ref{thm:spectral-FourierIntertwining} to compute, for almost every $\xi \in \R_+$, as follows
	\begin{align*}
		h_A(\xi) \varphi(\xi)
			&= \frac{(2\sqrt{\xi})^{\lambda + 1}}{\Gamma(\lambda + 1)}
				\int_{\R_+} b_\lambda(x + i \xi) 
					\big(A(\alpha_\lambda e_*)\big)(x + i \xi)
						e^{-x\xi} x^\lambda \dif x  \\
			&= \frac{(2\sqrt{\xi})^{\lambda + 1}}{\Gamma(\lambda + 1)} 
					e^\xi \varphi(\xi)
				\int_{\R_+} \big(A(\alpha_\lambda e_*)\big)(x + i \xi) 
						e^{-x\xi} x^\lambda \dif x,
	\end{align*}
	where we have used the expression for $b_\lambda$ which is independent of the integration variable $x$. Since $\varphi \in C_c(\R_+)$ was arbitrarily given, this yields \eqref{eq:ImPartInd-spectral} and thus completes the proof.
\end{proof}

\begin{remark}\label{rmk:ImPartInd-spectral}
	We observe that the function $e_*$ defined in Theorem~\ref{thm:ImPartInd-spectral} does not belong to $L^2(\C_+, v_\lambda)$ (as a straightforward computation will show) so one cannot apply Proposition~\ref{prop:ImPartInd-withfunction} with $\psi = e_*$ to move the function $\varphi$ outside of the operator $A$, as needed to obtain an explicit formula for $h_A$. This is the reason to introduce the auxiliary function $\alpha_\lambda$. Note that it is possible to consider other choices of such $\alpha_\lambda$. From the proof above, it seems natural to use the factor $\xi^{\frac{1}{2}(\lambda + 1)}$ in \eqref{eq:alpha_lambda}. However, the proof also shows that $\xi \mapsto e^{-\xi}$ may be replaced with any other function belonging to $L^2(\R_+)$ in the definition \eqref{eq:alpha_lambda} of $\alpha_\lambda$.
	
	On the other hand, for a multiplier operator $M_a$, given by $a \in L^\infty(\R_+)$ such that $a(x + i\xi) = \widehat{a}(x)$ (as with our previous notation), we have
	\[
		\big(M_a(\alpha_\lambda e_*)\big)(x + i\xi) 
			= \widehat{a}(x) \xi^{\frac{1}{2}(\lambda + 1)} e^{-\xi} e^{-x\xi}
	\]
	for almost every $x + i\xi \in \C_+$. Substitution of the latter in \eqref{eq:ImPartInd-spectral} leads to the cancellation of the factor function $\xi \mapsto e^{-\xi}$, or of any other suitable choice, and reduces the formula \eqref{eq:ImPartInd-spectral} for $h_{M_a} = h_a$ to the one stated in Corollary~\ref{cor:spectral-FourierIntertwining}, i.e.~the previously known spectral function for Toeplitz operators with translation invariant function symbol.
	
	It seems that Toeplitz operators with imaginary part independent operator symbol introduce the need of an auxiliary function $\alpha_\lambda$ as in Theorem~\ref{thm:ImPartInd-spectral}, at least with our current methods. It would be interesting to find similar formulas that do not depend on such auxiliary functions.
\end{remark}

\subsection*{Acknowledgement}
This research was partially supported by SNII-Secihti and Secihti Grants 61517 and CBF-2025-I-828.

\end{document}